%
%
%
%
\documentclass[reqno]{amsart}

\newtheorem{theorem}{Theorem}
\newtheorem{lemma}[theorem]{Lemma}
\newtheorem{proposition}[theorem]{Proposition}

\theoremstyle{definition}
\newtheorem{definition}[theorem]{Definition}
\usepackage{graphicx}
\usepackage[all]{xy}
\usepackage{bbm,amssymb,amsthm,amsfonts,amsmath,pifont,stmaryrd}
\usepackage{lineno}
\usepackage{pstricks, enumerate, pst-node, pst-text, pst-plot}

\theoremstyle{remark}
\newtheorem{remark}[theorem]{\bf Remark}

\numberwithin{equation}{section}
\numberwithin{theorem}{section}

\newcommand{\intav}[1]{\mathchoice {\mathop{\vrule width 6pt height 3 pt depth  -2.5pt
\kern -8pt \intop}\nolimits_{\kern -6pt#1}} {\mathop{\vrule width
5pt height 3  pt depth -2.6pt \kern -6pt \intop}\nolimits_{#1}}
{\mathop{\vrule width 5pt height 3 pt depth -2.6pt \kern -6pt
\intop}\nolimits_{#1}} {\mathop{\vrule width 5pt height 3 pt depth
-2.6pt \kern -6pt \intop}\nolimits_{#1}}}

\newcommand{\intavl}[1]{\mathchoice {\mathop{\vrule width 6pt height 3 pt depth  -2.5pt
\kern -8pt \intop}\limits_{\kern -6pt#1}} {\mathop{\vrule width 5pt
height 3  pt depth -2.6pt \kern -6pt \intop}\nolimits_{#1}}
{\mathop{\vrule width 5pt height 3 pt depth -2.6pt \kern -6pt
\intop}\nolimits_{#1}} {\mathop{\vrule width 5pt height 3 pt depth
-2.6pt \kern -6pt \intop}\nolimits_{#1}}}



\newcommand{\mc}{\mathcal}

\newcommand{\F}{\mc{F}}
\newcommand{\G}{\mc{G}}

\newcommand{\R}{\mathbb{R}}
\newcommand{\T}{\mathbb{T}}
\newcommand{\N}{\mathbb{N}}
\newcommand{\Q}{\mathbb{Q}}
\newcommand{\Z}{\mathbb{Z}}

\newcommand{\al}{\alpha}
\newcommand{\be}{\beta}
\newcommand{\la}{\lambda}

\newcommand{\ind}{\mathbbm{1}}

\begin{document}

\title[Law of large numbers for certain cylinder flows]{Law of large numbers for certain cylinder flows}

\author[Patr\'icia Cirilo]{Patr\'icia Cirilo}
\address{Universidade Estadual Paulista, Rua Crist\'ov\~ao Colombo 2265, 15054-000, S\~ao Jos\'e do Rio Preto, Brasil.}
\email{prcirilo@ibilce.unesp.br}

\author[Yuri Lima]{Yuri Lima}
\address{Weizmann Institute of Science, Faculty of Mathematics and Computer Science, POB 26, 76100, Rehovot, Israel.}
\email{yuri.lima@weizmann.ac.il}

\author[Enrique Pujals]{Enrique Pujals}
\address{Instituto Nacional de Matem\'atica Pura e Aplicada, Estrada Dona Castorina 110, 22460-320, Rio de Janeiro, Brasil.}
\email{enrique@impa.br}

\subjclass[2010]{37A05, 37A40}

\date{\today}

\keywords{cylinder flow, irrational rotation, law of large numbers,
rationally ergodic, skew product, weakly homogeneous.}

\begin{abstract}
We construct new examples of cylinder flows, given by skew product extensions of irrational rotations
on the circle, that are ergodic and rationally ergodic along a subsequence of iterates. In particular,
they exhibit law of large numbers. This is accomplished by explicitly calculating, for a subsequence of
iterates, the number of visits to zero, and it is shown that such number has a gaussian distribution.
\end{abstract}

\maketitle


\section{Introduction}

The purpose of this paper is to construct examples of skew product extensions of irrational
rotations of the additive circle $\T=\R/\Z$ exhibiting {\it law of large numbers}. More specifically,
under some weak diophantine conditions on the irrational number $\al\in\R$, we construct
{\it roof functions} $\phi:\T\rightarrow\Z$ for which the skew product
\begin{equation}\label{intro - eq 1}
\begin{array}{rcrcl}
F&:&\T\times\Z&\longrightarrow &\T\times\Z\\
 & & (x,y)    &\longmapsto     &(x+\al,y+\phi(x))
\end{array}
\end{equation}
is ergodic and rationally ergodic along a subsequence of iterates. This, in particular,
implies that $F$ has a law of large numbers. See Subsection \ref{sub lln} for
the proper definitions.

One must, first of all, observe that $F$ has a natural invariant measure, given by the product
of the Lebesgue measure on $\T$ and the counting measure on $\Z$, and it is {\it infinite}.
In this situation, classical theorems of ergodic theory are not valid. For instance,
Birkhoff's averages converge to zero almost surely, and this leads us to the following question:
what would be a good candidate for a Birkhoff-type theorem in this context? Denoting by $S_n\psi$ the
Birkhoff sum of the $L^1$-function $\psi:\T\times\Z\rightarrow\R$, the most natural way is try to
find a sublinear sequence $(a_n)$ of positive real numbers and consider the averages $S_n\psi/a_n$.
However, by a result of J. Aaronson (see \S 2.4 of \cite{Aa4}),
there is never a universal sequence $(a_n)$ for which $S_n\psi/a_n$ converges pointwise to the right
value. Nevertheless, Hopf's theorem (see \S 2.2 of \cite{Aa4}) is an indication that some sort of
regularity might exist and it might still be possible, for a specific sequence $(a_n)$, that the
averages oscillate without converging to zero or infinity and so one can hope for a summability
method that smooths out the fluctuations and forces convergence. Such {\it second order ergodic
theorems} were considered by J. Aaronson, M. Denker, and A. Fisher in \cite{ADF}.

Another attempt of obtaining a Birkhoff-type theorem has been made by Aaronson, in
which he defined and constructed examples of {\it rationally ergodic} maps (see \S 3.3 of \cite{Aa4}).
These maps possess a sort
of C\`esaro-averaged version of convergence in measure: there is a sequence $(a_n)$ such that, for
every $L^1$-function $\psi$ and every subsequence $(n_k)$ of positive integers, there exists a further
subsequence $(n_l)$ such that $S_{n_l}\psi(x)/a_{n_l}$ converges C\`esaro almost surely to $\int\psi$.
This latter property is called
{\it weak homogeneity} and the sequence $(a_n)$ is called a {\it return sequence}. Weak homogeneity
implies the existence of law of large numbers (see Subsection \ref{sub lln}).

A natural program of investigation regards two kinds of questions.
\begin{enumerate}[(i)]
\item What are the conservative, ergodic, rationally ergodic maps?
\item What fluctuations can the Birkhoff sums have?
\end{enumerate}

Our goal in this work is to give contributions to these questions by constructing examples of the form
(\ref{intro - eq 1}) that are ergodic and rationally ergodic along a subsequence of iterates. Up to
our knowledge, the first examples of ergodic cylinder flows were given by A. Krygin \cite{Kr74} and
K. Schmidt \cite{Sc78}. Their examples differ in nature. Krygin assures the existence, for any
irrational $\al$, of a roof function $\phi$ for which $F$ is ergodic. Actually, there exist elegant
categorical proofs that the set of pairs $(\al,\phi)$, in various different contexts,  for which $F$
is ergodic forms a residual set. See \cite{BM92}, \cite{Ka03}. On the other hand, Schmidt constructed
an explicit example motivated by the theory of random walks. He considered $\al=(\sqrt 5-1)/4$ and the
roof function equal to the Haar function defined in Section \ref{section - construction of function},
which is actually the basis function for our example. Subsequent works \cite{CK76}, \cite{Co80},
\cite{Sc77} of J.-P. Conze and M. Keane and Schmidt himself extended the results to every irrational
$\al$ and also to the larger class of roof functions
\begin{equation}\label{intro - eq 2}
\phi(x)=(\beta+1)\cdot\ind_{\left[0,\frac{\beta}{\beta+1}\right)}(x)-\beta.
\end{equation}
There are many other works regarding this question. See for instance \cite{ALMN92}, \cite{FL06},
\cite{Fr00}, \cite{Or83}, \cite{Pa91}.

Regarding (ii), J. Aaronson and M. Keane further investigated Schmidt's example in \cite{AaK}.
They studied the asymptotic behavior of the number of visits to zero and proved that the
Birkhoff sums represent a sort of ``deterministic random walk''. In particular, they showed that if
$\al$ is quadratic surd\footnote{The irrational number $\al$ is {\it quadratic surd} if it satisfies
a quadratic equation with integer coefficients.} then $F$ is rationally ergodic with return sequence
$a_n\sim n/\sqrt{\log n}$.

Not much is known regarding rational ergodicity. There are actually few examples that have been
proved to be rationally ergodic. See for instance \cite{Aa2}, \cite{Aa3}, \cite{AaK}, \cite{AaS},
\cite{LS}, where this property is shown to hold in different contexts. With respect to cylinder
flows given by skew products extensions of irrational rotations on the circle, the only
known examples are those in \cite{AaK}.

The most significant contribution of our work is to construct a new class of cylinder flows that
are rationally ergodic along a subsequence of iterates and, in particular, possess law of large
numbers.

\begin{theorem}\label{thm 1}
For any $\al\in\R$ such that $\liminf_{q\rightarrow\infty}q\|q\al\|=0$, there exists a skew product
$$
\begin{array}{rcrcl}
F&:&\T\times\Z&\longrightarrow &\T\times\Z\\
 & & (x,y)    &\longmapsto     &(x+\al,y+\phi(x))
\end{array}
$$
such that
\begin{itemize}
\item[(a)] $\phi$ belongs to $L^p(\T)$, for every $p\ge 1$, and
\item[(b)] $F$ is ergodic.
\end{itemize}
If $\al$ is also divisible, then
\begin{itemize}
\item[(c)] $F$ is rationally ergodic along a subsequence of iterates.
In particular, it has a law of large numbers.
\end{itemize}
\end{theorem}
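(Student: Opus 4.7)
The plan is to construct $\phi$ as an explicit infinite superposition
$\phi=\sum_{k\ge 1}c_k\phi_k$
of rescaled integer-valued Haar-type step functions of the shape (\ref{intro - eq 2}), indexed along a rapidly increasing subsequence $(q_{n_k})$ of denominators of best rational approximations to $\al$. The hypothesis $\liminf_{q\to\infty}q\|q\al\|=0$ furnishes such $q_{n_k}$ with $\|q_{n_k}\al\|$ substantially smaller than $1/q_{n_k}$, and these will be the privileged scales on which the Birkhoff sums of $\phi$ admit sharp control.

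For part (a), each $\phi_k$ is bounded with mean zero and has an explicit $L^p$-norm that grows at worst polynomially in the scale parameter $\beta_k$. Prescribing $c_k$ to decay fast enough relative to $\beta_k$ (for instance, dominated by a geometric sequence after multiplication by the $L^p$-norms for every fixed $p$) makes the series converge unconditionally in every $L^p(\T)$. This step is essentially bookkeeping.

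For ergodicity (b), I would appeal to Schmidt's theory of essential values. At the privileged times $q_{n_k}$, the cocycle $S_{q_{n_k}}\phi$ decomposes into a dominant term $c_k\,S_{q_{n_k}}\phi_k$ plus a tail coming from the other scales that can be shown to be small on a set of asymptotically full measure. Since $S_{q_{n_k}}\phi_k$ takes a constant nonzero integer value on a set of definite measure, the standard essential-value criterion forces a generator of $\Z$ into the essential value group $E(\phi)$; as $E(\phi)$ is a subgroup of $\Z$, it coincides with $\Z$ and $F$ is ergodic.

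The main obstacle is part (c). Following the Aaronson--Keane strategy from \cite{AaK}, the goal is to analyze, along a subsequence $(N_k)$ tied to the scales $q_{n_k}$, the number of returns $R_{N_k}(x)=\#\{0\le i<N_k : S_i\phi(x)=0\}$ to the zero fiber. Using the divisibility hypothesis on $\al$, the contributions of the distinct Haar components $\phi_j$ to $S_i\phi$ can be written as nearly independent bounded integer-valued random variables whose total variance accumulates to an amount of order $\log N_k$. A central limit theorem then produces a Gaussian limit law for the normalized cocycle, and a Darling--Kac-type computation of $\int (R_{N_k})^2\,dx$ yields the weak homogeneity property with return sequence $a_{N_k}\sim N_k/\sqrt{\log N_k}$. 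This upgrades $F$ to being rationally ergodic along $(N_k)$, which in turn delivers the law of large numbers. The technical heart of this step is showing that the cross-correlations between distinct Haar scales are negligible, so that the pieces behave as a genuine independent array to which the CLT applies; controlling these interactions is where the divisibility of $\al$ will be crucial.
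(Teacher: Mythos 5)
Your broad outline (Haar-type step functions at continuant scales, divisibility driving independence, Gaussian-flavored count of visits to zero) is in the same spirit as the paper, but the specific construction you propose has a structural flaw, and the mechanism you sketch for (c) is not the one that actually works here.

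The roof function $\phi=\sum_k c_k\phi_k$ with the $\phi_k$ of the shape (\ref{intro - eq 2}) and $c_k\to 0$ cannot be integer-valued, so the skew product would not land in $\T\times\Z$ as required by the statement. You cannot repair this by taking the $c_k$ to be integers either, because each $\phi_k$ of shape (\ref{intro - eq 2}) (or a dilate of it) has $\|\phi_k\|_1$ bounded away from zero, so $\sum c_k\phi_k$ with integer $c_k\neq 0$ will not converge in any $L^p$. The paper's key twist is to use \emph{coboundaries} for the rotation, $\phi=\tfrac12\sum_j\bigl[T(q_j(x+\al))-T(q_jx)\bigr]$: each summand is $\Z$-valued, its support has Lebesgue measure $\le 2\|q_j\al\|$ (so $L^p$-convergence comes from shrinking supports, not from decaying coefficients), and the Birkhoff sum of $\phi$ telescopes to $\tfrac12\sum_j\bigl[T(q_j(x+k\al))-T(q_jx)\bigr]$, which is directly computable. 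This telescoping is also what your ergodicity sketch would need but does not supply: without it, $S_{q_{n_k}}\phi$ has no clean decomposition into a dominant $k$-th scale plus a small tail.

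For (c), the paper does not run a CLT at all. Divisibility $2q_n\mid q_{n+1}$ gives exact equipartition of the residues $\{1,\dots,q_{n+1}\}$ among sign patterns of $T_1,\dots,T_n$ (Proposition \ref{proposition - counting}), so $S^{\tilde F_n}_{q_{n+1}}(x)=\tfrac{q_{n+1}}{2^n}\binom{n}{(n+m_n(x))/2}$ \emph{exactly}, and the distribution of $m_n$ is exactly binomial; Renyi's inequality then follows from identities for sums of powers of binomial coefficients plus Stirling. There is no need to control cross-correlations asymptotically. Finally, your predicted return sequence $a_{N_k}\sim N_k/\sqrt{\log N_k}$ is the Aaronson--Keane answer for quadratic surds and the full continuant sequence; for the construction here the normalization is $a_{q_{n+1}}=q_{n+1}/\sqrt{\pi n}$, where $n$ is the number of Haar scales involved. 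Since the paper passes to a sparse subsequence of continuants (conditions (CF1)--(CF5)), $n$ can be far smaller than $\log q_{n+1}$, and the two sequences are not asymptotically equivalent in general.
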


An irrational number $\al$ is {\it divisible} if it has a subsequence of continuants $(q_n)$ with a
certain divisibility property and such that $\lim_{n\rightarrow\infty}q_n\|q_n\al\|=0$.
See Subsection \ref{sub continued fractions} for the specific definitions. It is worth noting that
the set of $\al$ satisfying these two conditions has full Lebesgue measure, according to the content
of Appendix \ref{appendix continued fractions}. Thus, in contrast to \cite{AaK}, in which the set
of parameters is countable, Theorem \ref{thm 1} holds for a set of
parameters of full Lebesgue measure\footnote{In a previous version of this paper, Theorem \ref{thm 1}
required stronger conditions on $\al$ for which the set of parameters has zero Lebesgue measure,
but it was pointed to us that the proof works for any divisible irrational number.}.

A remarkable feature of Theorem \ref{thm 1} is that the number of visits to zero along the iterates
in which $F$ is rationally ergodic exhibits a gaussian distribution. The return sequence is given
by $a_{q_{n+1}}=q_{n+1}/\sqrt{\pi n}$ and the normalized averages, described in equation
(\ref{equation norm average}), do not depend on the choice of $\al$ neither on the sequence $(q_n)$.

The roof function we construct is different in nature from the others used in this context.
We consider the Haar function $T$ defined in Section \ref{section - construction of function}
as a basis function and let
$$\phi(x)=\dfrac{1}{2}\sum_{j\ge 1}\Big[T(q_j(x+\al))-T(q_jx)\Big]$$
for a specific chosen sequence of positive integers $(q_n)$. One can see $\phi$ as the limit of
worser and worser coboundaries
\begin{equation}\label{intro - eq 3}
\phi_n(x)=\dfrac{1}{2}\sum_{j=1}^n\Big[T(q_j(x+\al))-T(q_jx)\Big].
\end{equation}
Observe that, if we just consider the coboundary $\phi_n$, the  respective cylinder flow will not
be ergodic and, moreover, will be conjugate to a rigid rotation. The increasing bad feature of
each $\phi_n$ is what will guarantee that $\phi$ has the required properties. The sequence
$(q_n)$ will be chosen via the continued fraction expansion of $\al$ and this is why the diophantine
properties of $\al$ influence the dynamical properties of $F$. Even though $\phi$ is unbounded,
the good feature of it is that we can explicitly calculate the number of visits to zero along a
sequence of iterates of $F$. See Lemma \ref{proposition - returns for rational} and Subsection
\ref{sub returns for F}.

The paper is organized as follows. In Section \ref{section preliminaries} we introduce the basic
notations and definitions as well as the necessary background for the sequel. Section
\ref{section - construction of function} is devoted to the construction of the roof function $\phi$
and the related convergence issues. In Section \ref{section - ergodicity} we establish the ergodicity
of $F$ with the aide of the theory of random walks. To this matter, Appendix \ref{appendix random walk}
treats the required results, adapted to our context. Section \ref{section - number of returns}
calculates the number of returns of a generic point to its fiber, assuming that $\al$ is divisible.
This in particular implies the second part of Theorem \ref{thm 1}, which is the content of Section
\ref{section - proof of rat ergodicity}. In Appendix \ref{appendix continued fractions}, we enclose the
results on continued fractions that allows us to state our results in the greatest possible generality.

\begin{remark}
In some sense, our construction resembles Anosov-Katok method of fast approximations developed in
\cite{AnKa}. Indeed, the referred maps are obtained as limits of periodic maps and here we will also
use this perspective (see Subsection \ref{sub phi tilde}). Another example that resembles ours is
Hajian-Ito-Kakutani's map. See \S 3.3 of \cite{Sa10} for a detailed exposition of this map.
\end{remark}

\section{Preliminaries}\label{section preliminaries}

\subsection{General notation}\label{sub general notation}

Given a set $X$, $\#X$ denotes the cardinality of $X$. If $A$ is a subset of $X$, $\ind_A:X\rightarrow\{0,1\}$
denotes the characteristic function of $A$:
$$
\ind_A(x)=\left\{
\begin{array}{lcl}
1&,&\text{ if }x\in A\\
0&,&\text{ if }x\in X\backslash A.
\end{array}
\right.
$$
$\Z$ denotes the set of integers and $\N$ the set of positive integers.
Each $n\in\N$ defines the ring $\Z_n$ of the residue classes module $n$. A
{\it complete residue system} is a set $\{a_1,\ldots,a_n\}$ of integers such that
$\{a_1,\ldots,a_n\}$ modulo $n$ is equal to $\Z_n$.

Given a real number $x$, $\lfloor x\rfloor$ and $\{x\}$ are the integer and fractional
parts of $x$, respectively. Let $\|x\|$ be the distance from $x$ to the closest integer,
$$\|x\|=\min\{\{x\},1-\{x\}\}.$$

We use the following notation to compare the asymptotic of functions.

\begin{definition}\label{def vinogradov}
Let $f,g:\N\rightarrow\R$ be two real-valued functions. We say $f\lesssim g$ if there is a
constant $C>0$ such that
$$|f(n)|\le C\cdot |g(n)|\,,\ \ \forall\,n\in\N.$$
If $f\lesssim g$ and $g\lesssim f$, we write $f\sim g$. We say $f\approx g$ if
$$\lim_{n\rightarrow\infty}\dfrac{f(n)}{g(n)}=1.$$
\end{definition}

Let $\T=\R/\Z$ denote the circle, parameterized by $[0,1)$, and let $d:\T\times\T\rightarrow\R$ be the
induced distance function. For every $\al\in\R$, $R_\al:\T\rightarrow\T$
is the rotation $R_\al x=x+\al$.

Let $\la$ be the Lebesgue measure on $\T$ and $\mu$ the measure defined on the cylinder $\T\times\Z$
by $\mu=\la\times$ counting measure on $\Z$. Given a function $\psi:\T\rightarrow\R$, its $L^p$-norm
with respect to $\la$ is defined as
$$\|\psi\|_p=\left(\int_\T |\psi|^pd\la\right)^{1/p}$$
and the space of $L^p$-integrable functions as $L^p(\T)$. Due to the index $p$, there will be no confusion
between the integer norm $\|\cdot\|$ and the $L^p$-norm $\|\cdot\|_p$.

\subsection{Continued fractions}\label{sub continued fractions}

Given an irrational number $\al$, consider its {\it continued fraction expansion}
$$\al\ =\ a_0+\dfrac{1}{a_1+\dfrac{1}{a_2+\dfrac{\ \ \ 1\ \ \ }{\ddots}}}\ :=\ [a_0;a_1,a_2,\ldots]\ ,$$
whose $n^{th}$-{\it convergent} is
$$\al_n=\dfrac{p_n}{q_n}=[a_0;a_1,a_2,\ldots,a_n],\ n\ge 0.$$

The $q_n$ are called the {\it continuants}. They give the best rational
approximations to $\al$. More precisely, the approximation is equal to
$$\|q_n\al\|=q_n\cdot\left|\al-\dfrac{p_n}{q_n}\right|\,\cdot$$
It is known, by Dirichlet's theorem, that
$$\liminf_{q\rightarrow\infty}q\|q\al\|\le 1$$
for any $\al\in\R$. Let $\alpha$ be {\it divisible} if it has a sequence $(q_{n_j})$ of continuants
satisfying
$$2q_{n_j}\text{ divides }q_{n_{j+1}}\ \ \text{ and }\ \ \lim_{j\rightarrow\infty}q_{n_j}\|q_{n_j}\al\|=0.$$
The set of divisible numbers has full Lebesgue measure in $\R$. This is the content of
Proposition \ref{proposition cf 1} which, in particular, guarantees that Theorem \ref{thm 1}
is valid for Lebesgue almost every $\al\in\R$.

From now on, $(q_n)$ will denote a subsequence of (instead of all) continuants of $\al$ such that
\begin{equation}\label{continued fractions 0}
\lim_{n\rightarrow\infty}q_{n}\|q_n\al\|=0
\end{equation}
and, whenever $\al$ is divisible, this chosen sequence $(q_n)$ will also satisfy that $2q_n$
divides $q_{n+1}$. We will also make constant use of the following conditions:
\begin{enumerate}
\item[(CF1)] For any $n\ge 1$,
$$2\sum_{j>n}\|q_j\al\|<\|q_n\al\|.$$
\item[(CF2)] For any $p\ge 1$,
$$\sum_{j\ge 1}j^{p+1}\cdot\|q_j\al\|<\infty.$$
\item[(CF3)] For any $p\ge 1$,
$$\sum_{j=1}^n j^{p+1}\cdot q_j<q_{n+1}\ \text{ for }n>n(p).$$
\item[(CF4)] For any $n\ge 1$,
$$\left(2^n\sum_{j=1}^{n-1}q_j\right)\cdot q_n\|q_n\al\|<1.$$
\item[(CF5)] For any $n\ge 1$, $\{\al,2\al,\ldots,q_{n+1}\al\}$ is $\left(\frac{1}{2q_n}\right)^2$-dense in $\T$.
\end{enumerate}

Condition (CF2) is always satisfied. Indeed,
$$\sum_{j\ge 1}j^{p+1}\cdot\|q_j\al\|<\sum_{j\ge 1}\dfrac{j^{p+1}}{q_j}$$
is bounded for every $p\ge 1$, because the exponential behavior of $q_j$ controls the
polynomial behavior of $j^{p+1}$. (CF1), (CF3), (CF4) and (CF5) are assured by passing, if
necessary, to a subsequence of $(q_n)$.

\subsection{Birkhoff sums}

Let $\al\in\R$, $\phi:\T\rightarrow\R$ a $L^1$-measurable function and $F$ defined
as in (\ref{intro - eq 1}). The dynamics of $F$ is intimately connected to the cocycle
$S(\al,\phi):\T\times\Z\rightarrow\R$ defined as the Birkhoff sums of $\phi$ with respect to
the rotation $R_\al$:
$$
S(\al,\phi)(x,n)=\left\{
\begin{array}{ll}
\displaystyle\sum_{k=0}^{n-1}\phi(x+k\al)&,\text{if }n\ge 1\\
&\\
0                                        &,\text{if }n=0   \\
&\\
-\displaystyle\sum_{k=1}^{-n}\phi(x-k\al)&,\text{if }n<0.\\
\end{array}\right.
$$
For simplicity, we denote $S(\al,\phi)(\,\cdot\,,n):\T\rightarrow\R$ by $S_n(\al,\phi)$.
From now on, we assume $\int_\T \phi d\lambda=0$. This in particular implies conservativity
of the associated cylinder flow. See \S 8.1 of \cite{Aa4}.

\subsection{Law of large numbers}\label{sub lln}

As observed in the introduction, there is no Birkhoff-type theorem for ergodic
and infinite measure-preserving systems. Nevertheless, one can hope for a law of
large numbers.

\begin{definition}
A {\it law of large numbers} for a conservative ergodic measure-preserv\-ing system
$(X,\mathcal A,\mu,F)$ is a function $L:\{0,1\}^\N\rightarrow[0,\infty]$ such that,
for any $A\in\mathcal A$, the equality
$$L(\ind_A(x),\ind_A(Fx),\ind_A(F^2x),\ldots)=\mu(A)$$
holds for $\mu$-almost every $x\in X$.
\end{definition}

One can see the function $L$ as a sort of blackbox: given the input of hittings of a generic
point $x\in X$ to a fixed set $A\in\mathcal A$, the output is the measure of $A$. There are
systems with no law of large numbers\footnote{Squashable transformations, for example, have no law
of large numbers. See \S 8.4 of \cite{Aa4}.}. On the other hand, there are some conditions that
guarantee its existence.

Given $A\in\mathcal A$, let $S_n(A):X\rightarrow\N$ be the Birkhoff sum
of the characteristic function $\ind_A$ with respect to $F$.

\begin{definition}\label{def rational ergodicity}
A conservative ergodic measure-preserving system $(X,\mathcal A,\mu,F)$ is called
{\it rationally ergodic along a subsequence of iterates} if there is a set $A\in\mathcal A$
with $0<\mu(A)<\infty$ satisfying the {\it Renyi inequality}
$$\int_A S_{n_k}(A)^2d\mu \ \lesssim\ \left(\int_A S_{n_k}(A)d\mu\right)^2$$
for some increasing sequence $(n_k)$ of positive integers.
\end{definition}

We note the above definition differs from the original one \cite{Aa2}, since the Renyi
inequality is asked to hold, instead of all positive integers, only for a subsequence
of them.

\begin{definition}
A conservative ergodic measure-preserving system $(X,\mathcal A,\mu,F)$ is called
{\it weakly homogeneous} if there is a sequence $(a_{n_k})$ of positive real numbers
such that, for all $\phi\in L^1(X,\mathcal A,\mu)$,
\begin{equation}\label{iet - eq 1}
\dfrac{1}{N}\sum_{k=1}^N\dfrac{1}{a_{n_k}}\sum_{j=0}^{n_k-1}\phi\left(F^jx\right)\ \longrightarrow\ \int_X\phi d\mu
\end{equation}
for $\mu$-almost every $x\in X$.
\end{definition}

$(a_{n_k})$ is called a {\it return sequence} of $F$ and it is unique up to asymptotic equality.
Aaronson proved that rational ergodicity along a subsequence of iterates implies weak homogeneity with
\begin{equation}\label{renyi - eq 1}
a_{n_k}=\dfrac{1}{\mu(A)^2}\int_A S_{n_k}(A)d\mu =\dfrac{1}{\mu(A)^2}\sum_{j=0}^{n_k-1}\mu\left(A\cap F^{-j}A\right).
\end{equation}
See \S 3.3 of \cite{Aa4}. Observe that weak homogeneity defines a law of large numbers
$L:\{0,1\}^\N\rightarrow[0,\infty]$ by
$$L(x_0,x_1,\ldots)=\left\{
\begin{array}{rl}
\displaystyle\lim_{N\rightarrow\infty}\dfrac{1}{N}\sum_{k=1}^N\dfrac{1}{a_{n_k}}\sum_{j=0}^{n_k-1}x_j\ ,&\text{ if the limit exists,}\\
&\\
0\ ,&\text{ otherwise.}
\end{array}
\right.$$

The goal of this work is to construct examples of cylinder flows given by skew product extensions
of irrational rotations on the circle that are ergodic and rationally ergodic along a subsequence
of iterates and, therefore, have law of large numbers.

\section{Construction of roof function $\phi$}\label{section - construction of function}

Let $T:\T\rightarrow\Z$ be the {\it Haar function}, defined as
$$T(x)=\left\{
\begin{array}{rl}
1\ ,&\text{ if }x\in\left[0,\dfrac{1}{2}\right)\\
&\\
-1\ ,&\text{ if }x\in\left[\dfrac{1}{2},1\right)\,.
\end{array}
\right.$$

\begin{center}
\psset{unit=1.5cm} \begin{pspicture}(-.6,-2)(2.8,1.6)

\psline[linewidth=.5pt]{->}(-.6,0)(2.8,0)\psline[linewidth=.5pt]{->}(0,-1.4)(0,1.4)
\psline[linewidth=1pt,linecolor=red]{*-o}(0,1)(1,1)
\psline[linewidth=1pt,linecolor=blue]{*-o}(1,-1)(2,-1)
\psline[linestyle=dotted](1,1)(1,-1)
\psline[linestyle=dotted](0,-1)(1,-1)
\psline[linestyle=dotted](2,0)(2,-1)
\uput[180](0,1){1}\uput[180](0,-1){-1}\uput[90](2,0){1}\uput[45](1,0){$\frac{1}{2}$}
\uput[180](2.6,-1.7){Figure 1: the graph of $T$.}
\end{pspicture}\end{center}

Let $\al\in\R$ and $(q_n)$ its associated subsequence of continuants, that is, satisfying
(\ref{continued fractions 0}) and (CF1) to (CF4). For each $j\ge 1$, let $T_j:\T\rightarrow\Z$ be the dilation of
$T$ by $q_j$, that is, $T_j(x)=T(q_jx)$, where $q_jx$ (and any expression appearing as
argument of $T$) is taken modulo $1$. The function we will consider is
$$\phi(x)=\dfrac{1}{2}\sum_{j\ge 1}\Big[T_j(x+\al)-T_j(x)\Big].$$
First of all, it is not clear that this defines a $L^1$-measurable function.
The proof of this fact depends on a couple of auxiliary lemmas.

\begin{lemma}\label{lemma - auxiliary 1}
Let $q$ be a positive integer and $\beta,\gamma\in\T$. Then the set
$$\{x\in\T\,;\,T(qx+\beta)\not=T(qx+\gamma)\}$$
has Lebesgue measure equal to $2\|\beta-\gamma\|$.
\end{lemma}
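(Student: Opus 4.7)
The plan is to reduce the problem to the case $q=1$ by using the $q$-to-one covering structure of $x \mapsto qx \pmod 1$, and then handle the base case by a direct computation of the symmetric difference of two half-circles.

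First I would treat the case $q=1$. Setting $A=[0,\tfrac12)$, so that $T(x+\beta)\neq T(x+\gamma)$ iff exactly one of $x+\beta,x+\gamma$ lies in $A$, the set in question is
\[
\bigl(A-\beta\bigr)\,\triangle\,\bigl(A-\gamma\bigr).
\]
By translation invariance of Lebesgue measure on $\T$, this has the same measure as $A\,\triangle\,(A+\delta)$, where $\delta=\beta-\gamma\pmod 1$. Writing $\delta'=\|\delta\|\in[0,\tfrac12]$ and distinguishing the two cases $\delta\le\tfrac12$ and $\delta>\tfrac12$, a direct calculation of the intersection of $[0,\tfrac12)$ with its translate gives a symmetric difference of measure $2\delta'=2\|\beta-\gamma\|$.

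For general $q$, I would partition $\T$ into the $q$ half-open intervals $I_k=[k/q,(k+1)/q)$, $0\le k<q$. On each $I_k$, the map $x\mapsto qx\pmod 1$ is an affine bijection onto $\T$ that scales Lebesgue measure by the factor $1/q$. Hence
\[
\la\bigl\{x\in I_k\,:\,T(qx+\beta)\neq T(qx+\gamma)\bigr\}
= \tfrac{1}{q}\,\la\bigl\{y\in\T\,:\,T(y+\beta)\neq T(y+\gamma)\bigr\},
\]
and summing over the $q$ intervals, together with the $q=1$ case just established, yields the claimed value $2\|\beta-\gamma\|$.

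No step here is a genuine obstacle; the only point to be slightly careful about is keeping track of the fractional/integer parts when computing $A\triangle(A+\delta)$ (the two sub-cases $\delta\le\tfrac12$ and $\delta>\tfrac12$) so that the answer is uniformly expressed in terms of $\|\beta-\gamma\|$ rather than $\{\beta-\gamma\}$.
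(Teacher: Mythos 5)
Your proof is correct and follows essentially the same approach as the paper: the paper simply invokes that $x\mapsto qx$ preserves Lebesgue measure on $\T$ to reduce to $q=1$, which is exactly what your partition into the $q$ affine branches establishes, and then treats the $q=1$ case as the elementary symmetric-difference computation you spell out.
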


\begin{center}
\psset{unit=1.5cm} \begin{pspicture}(-.6,-2)(5,1.6)

\psline[linewidth=.5pt]{->}(-.6,0)(5,0)\psline[linewidth=.5pt]{->}(0,-1.4)(0,1.4)
\psline[linewidth=1pt,linecolor=red]{*-o}(0,1)(.5,1)
\psline[linewidth=1pt,linecolor=red]{*-o}(1,1)(1.5,1)
\psline[linewidth=1pt,linecolor=red]{*-o}(3,1)(3.5,1)
\psline[linewidth=1pt,linecolor=blue]{*-o}(.5,-1)(1,-1)
\psline[linewidth=1pt,linecolor=blue]{*-o}(1.5,-1)(2,-1)
\psline[linewidth=1pt,linecolor=blue]{*-o}(3.5,-1)(4,-1)
\psline[linestyle=dotted](.5,1)(.5,-1)
\psline[linestyle=dotted](0,-1)(.5,-1)
\psline[linestyle=dotted](1,1)(1,-1)
\psline[linestyle=dotted](1.5,1)(1.5,-1)
\psline[linestyle=dotted](3.5,1)(3.5,-1)
\psline[linestyle=dotted](2,0)(2,-1)
\psline[linestyle=dotted](3,0)(3,1)
\psline[linestyle=dotted](4,0)(4,-1)
\uput[180](0,1){1}\uput[180](0,-1){-1}
\uput[45](1,0){$\frac{2}{2q}$}\uput[45](.5,0){$\frac{1}{2q}$}
\uput[140](3,0){$\frac{2q-2}{2q}$}\uput[45](3.5,0){$\frac{2q-1}{2q}$}
\uput[-45](4,0){1}\uput[180](4.3,-1.7){Figure 2: the graph of $x\mapsto T(qx)$.}
\end{pspicture}\end{center}

\begin{proof}
Just observe that, because the Lebesgue measure is preserved under the map
$x\mapsto qx$, $\{x\in\T\,;\,T(qx+\beta)\not=T(qx+\gamma)\}$
has Lebesgue measure equal to the Lebesgue measure of the set
$\{x\in\T\,;\,T(x+\beta)\not=T(x+\gamma)\}$, which is equal to $2\|\beta-\gamma\|$.
\end{proof}

\begin{lemma}\label{lemma - auxiliary 2}
Let $(q_n)$ be a sequence of positive integers and $(\beta_n)$, $(\gamma_n)$
sequences in $\T$. If $\psi:\T\rightarrow\Z$ is defined by
$$\psi(x)=\dfrac{1}{2}\sum_{j\ge 1}\Big[T(q_jx+\beta_j)-T(q_jx+\gamma_j)\Big],$$
then
\begin{equation}\label{convergence - eq 1}
\left\|\psi\right\|_p^p\le 2\sum_{j\ge 1}j^{p+1}\cdot\|\beta_j-\gamma_j\|.
\end{equation}
\end{lemma}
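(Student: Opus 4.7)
The plan is to bound $|\psi(x)|^p$ pointwise by a \emph{linear} combination of indicator functions whose integrals are delivered directly by Lemma \ref{lemma - auxiliary 1}. For each $j\ge 1$ I would introduce the ``discrepancy set''
$$E_j:=\{x\in\T\,:\,T(q_jx+\beta_j)\neq T(q_jx+\gamma_j)\},$$
which by Lemma \ref{lemma - auxiliary 1} has Lebesgue measure $\lambda(E_j)=2\|\beta_j-\gamma_j\|$. Since each summand $\tfrac12[T(q_jx+\beta_j)-T(q_jx+\gamma_j)]$ takes values in $\{-1,0,1\}$ and vanishes off $E_j$, the triangle inequality yields the pointwise bound $|\psi(x)|\le N(x)$, where
$$N(x):=\sum_{j\ge 1}\ind_{E_j}(x)$$
counts the number of ``bad'' indices at $x$.

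The core step is then the combinatorial inequality
$$N(x)^p\;\le\;\sum_{j\ge 1}j^{p+1}\,\ind_{E_j}(x).$$
For $x$ with $N(x)=k$ finite, list the bad indices as $j_1<j_2<\cdots<j_k$. Since these are distinct positive integers, $j_k\ge k$, and hence, for $p\ge 1$,
$$k^p\;\le\;j_k^p\;\le\;j_k^{p+1}\;\le\;\sum_{i=1}^k j_i^{p+1}\;=\;\sum_{j\ge 1}j^{p+1}\,\ind_{E_j}(x).$$
The cases $N(x)=0$ and $N(x)=\infty$ are trivial (both sides vanish or both are infinite). Combining $|\psi|^p\le N^p$ with this estimate and integrating over $\T$,
$$\|\psi\|_p^p\;\le\;\sum_{j\ge 1}j^{p+1}\lambda(E_j)\;=\;2\sum_{j\ge 1}j^{p+1}\|\beta_j-\gamma_j\|,$$
which is exactly (\ref{convergence - eq 1}).

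A subordinate point is that $\psi$ should be genuinely defined almost everywhere; this is automatic whenever the right-hand side above is finite, since then $\sum_j\lambda(E_j)\le\sum_j j^{p+1}\lambda(E_j)<\infty$ and Borel--Cantelli forces $N(x)<\infty$ for $\lambda$-almost every $x$, so the series defining $\psi$ has only finitely many nonzero terms at such points. The only real obstacle is the combinatorial inequality $k^p\le j_k^{p+1}$: this is what produces the exponent $p+1$ (rather than $p$) on the right-hand side, and it is also the reason a naive application of Minkowski to the $\phi_j$'s would not recover the bound in the required form; instead one must absorb the $p$-th power of the counting function $N$ into a linear sum of indicators, using only the elementary fact that strictly increasing positive integers satisfy $j_k\ge k$.
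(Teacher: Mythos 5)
Your proof is correct, and it rests on the same key lemma (Lemma \ref{lemma - auxiliary 1}) and the same combinatorial observation as the paper's: if $x$ lies in $k$ of the discrepancy sets $E_j$, the largest such index $j_k$ is at least $k$, which is what converts a $p$-th power into an exponent $p+1$. The execution differs, though. The paper truncates to $\psi_n$, invokes Fatou's lemma, then decomposes $\T$ into the level sets $A_m=\{|\psi_n|=m\}$, further partitions each $A_m$ into pieces $A_m^j$ indexed by the largest bad index $j$, and finishes with a Fubini-style resummation over $m\le j$. You instead skip the truncation entirely, establish the pointwise bound $N(x)^p \le \sum_{j\ge 1} j^{p+1}\ind_{E_j}(x)$ directly (using $k^p \le j_k^p \le j_k^{p+1}$), and integrate by Tonelli. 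This is visibly leaner: you do not need Fatou (monotone convergence for a nonnegative series suffices) or the intermediate level-set bookkeeping. One small remark: your inequality $k^p \le j_k^{p+1}$ is actually cruder than what the paper implicitly exploits (the paper in effect uses $\sum_{m=1}^{j} j^p = j^{p+1}$), but since $j_k^{p+1}$ is a single term of $\sum_{i=1}^k j_i^{p+1}$, the crude bound suffices and costs nothing at the level of the final estimate. Your closing observation that finiteness of the right-hand side forces $N(x)<\infty$ a.e.\ (Borel--Cantelli), so $\psi$ is genuinely defined a.e., addresses the only subtlety the paper handles via the $\Lambda_n$ truncation.
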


\begin{proof}
Assume the right hand side of (\ref{convergence - eq 1}) is finite. In particular,
$\sum\|\be_j-\gamma_j\|$ is convergent. For each $n\ge 1$, let
$$\Lambda_n=\{x\in\T\,;\,T(q_jx+\beta_j)=T(q_jx+\gamma_j),\ \forall\,j>n\}.$$
In $\Lambda_n$, we have
$$\psi(x)=\dfrac{1}{2}\sum_{j=1}^n\Big[T(q_jx+\beta_j)-T(q_jx+\delta_j)\Big].$$
The complement of $\Lambda_n$ is defined by the property that $T(q_jx+\beta_j)\not=T(q_jx+\gamma_j)$
for some $j>n$. By Lemma \ref{lemma - auxiliary 1}, its Lebesgue measure is at most
$2\sum_{j>n}\|\be_j-\gamma_j\|$. Then the sequence of functions $(\psi_n)$ given by
$\psi_n=\psi\cdot\ind_{\Lambda_n}$ converges pointwise to $\psi$. By Fatou's Lemma, the
result will follow if we manage to prove (\ref{convergence - eq 1}) for each $\psi_n$.

Fixed $n\ge 1$, we have
$$|\psi_n(x)|\le\sum_{j=1}^n\left|\dfrac{T(q_jx+\beta_j)-T(q_jx+\gamma_j)}{2}\right|\,,\ \forall\, x\in\T.$$
Define, for each $m\in\{1,\ldots,n\}$, the set
$$A_m=\left\{x\in\T\,;\,\sum_{j=1}^n\left|\dfrac{T(q_jx+\beta_j)-T(q_jx+\gamma_j)}{2}\right|=m\right\}.$$
If we further define, for each $j\in\{1,\ldots,n\}$, the set
$$A_m^j=\left\{x\in A_m\,;\,j\text{ is the largest index such that }T(q_jx+\beta_j)\not=T(q_jx+\gamma_j)\right\},$$
then
$$A_m=\bigsqcup_{j=m}^n A_m^j.$$
Each $A_m^j$ is contained in the set $\{x\in\T\,;\,T(q_jx+\beta_j)\not=T(q_jx+\gamma_j)\}$
and so, by Lemma \ref{lemma - auxiliary 1}, its Lebesgue measure is at most $2\|\beta_j-\gamma_j\|$.
Summing up this estimate in $j$ and $m$, we obtain that
\begin{eqnarray*}
\left\|\psi_n\right\|_p^p&=  &\int_{\T}|\psi_n|^pd\la\\
                &\le&\sum_{m=1}^n m^p\cdot\la(A_m)\\
                &\le&2\sum_{m=1}^n m^p\sum_{j=m}^n\|\beta_j-\gamma_j\|\\
                &\le&2\sum_{m=1}^n\sum_{j=m}^n j^p\cdot\|\beta_j-\gamma_j\|\\
                &\le&2\sum_{j\ge 1}j^{p+1}\cdot\|\beta_j-\gamma_j\|\,,
\end{eqnarray*}
thus establishing (\ref{convergence - eq 1}) for $\psi_n$.
\end{proof}

Lemma \ref{lemma - auxiliary 2} will be used repeatedly in the next subsections, the first
time being to prove that $\phi_n$, as defined in (\ref{intro - eq 3}), converges to $\phi$.

\subsection{$(\phi_n)$ converges to $\phi$ in $L^p(\T)$}\label{sub phi is L^p}

By Lemma \ref{lemma - auxiliary 2},
\begin{eqnarray*}
\left\|\phi-\phi_n\right\|_p^p&=  &\left\|\dfrac{1}{2}\sum_{j>n}\Big[T(q_jx+q_j\al)-T(q_jx)\Big]\right\|_p^p\\
                              &\le&2\sum_{j>n}j^{p+1}\cdot\|q_j\al\|
\end{eqnarray*}
which, by condition (CF2), goes to zero as $n$ goes to infinity.

\subsection{$(\tilde{\phi}_n)$ converges to $\phi$ in $L^p(\T)$}\label{sub phi tilde}

In order to make the calculations of Section \ref{section - number of returns}, in which
estimates on the return map of $F$ will be given, we need to approximate $\phi$ by something
easier to manage with. We will approximate $\phi$ not by $\phi_n$, but by its ``rational''
truncated versions $\tilde\phi_n$, defined as
\begin{equation}\label{convergence - eq 2}
\tilde\phi_n(x)=\dfrac{1}{2}\sum_{j=1}^n\Big[T_j(x+\alpha_{n+1})-T_j(x)\Big].
\end{equation}

Let us prove that the functions $\tilde\phi_n$ converge to $\phi$ in $L^p(\T)$ for any $p\ge 1$.
This follows by another application of Lemma \ref{lemma - auxiliary 2}. Indeed, as
$$\phi_n(x)-\tilde\phi_n(x)=\dfrac{1}{2}\sum_{j=1}^n\Big[T(q_jx+q_j\al)-T(q_jx+q_j\al_{n+1})\Big],$$
we have
\begin{eqnarray*}
\left\|\tilde\phi_n-\phi_n\right\|_p^p&\le&2\sum_{j=1}^n j^{p+1}\cdot\|q_j\al-q_j\al_{n+1}\|\\
                             &\le&2\sum_{j=1}^n j^{p+1}\cdot q_j\cdot|\al-\al_{n+1}|\\
                             &=  &\dfrac{2\|q_{n+1}\al\|}{q_{n+1}}\sum_{j=1}^n j^{p+1}\cdot q_j\\
                             &\le&2\|q_{n+1}\al\|\,,
\end{eqnarray*}
where in the last inequality we used (CF3).

\section{Ergodicity}\label{section - ergodicity}

\subsection{Branches and plateaux}\label{sub branches and plateaux}

We call a {\it branch} of $T_j$ any of the branches $\left[\frac{i}{q_j},\frac{i+1}{q_j}\right)$,
$i=0,1,\ldots,q_j-1$, of the expanding map $x\mapsto q_jx$. Each branch of $T_j$ decomposes
itself in two subintervals $\left[\frac{2i}{2q_j},\frac{2i+1}{2q_j}\right)$ and
$\left[\frac{2i+1}{2q_j},\frac{2i+2}{2q_j}\right)$, each of them called a {\it plateau} of $T_j$,
in which $T_j$ is constant (see figure 2). The first will be called a {\it positive plateau}
and the second a {\it negative plateau}.

Let $I_j(x)$ denote the plateau of $T_j$ containing $x$ and
$$m_n(x):=T_1(x)+\cdots+T_n(x)\ , \ \ n\ge 1.$$
If $(q_n)$ satisfies the divisibility condition, then clearly $I_1(x)\supset I_2(x)\supset\cdots$ and so
we have the implication
\begin{equation}\label{ergo - eq 1}
y\in I_n(x)\ \Longrightarrow\ m_n(x)=m_n(y).
\end{equation}
This is also true if, instead of the divisibility condition, $(q_n)$ satisfies Lemma \ref{lemma random walk}.
More specifically, using the notation of Appendix \ref{appendix random walk},
$$I_{n_0}(x)\supset I_{n_0+1}(x)\supset\cdots\ \ \text{ whenever }x\in \Omega_{n_0}^\infty.$$
For such a fixed $x$, there is a positive integer $n_1=n_1(x)$ such that
\begin{eqnarray*}
I_n(x)&\subset &I_1(x),\ldots,I_{n_0}(x)\\
\Longrightarrow\hspace{.7cm}\bigcap_{j=1}^n I_j(x)&=&I_n(x)
\end{eqnarray*}
for every $n\ge n_1$ and so (\ref{ergo - eq 1}) remains valid. We will use this condition below.

\subsection{Ergodicity}

We will prove ergodicity in two steps.\\

\noindent {\bf Step 1.} For any $A\subset\T\times\{0\}$ of positive measure, the union
$\bigcup_{n\ge 1}F^nA$ contains $\T\times\{0\}$ modulo zero.\\

\noindent {\bf Step 2.} $F(\T\times\{0\})\cap(\T\times\{1\})$ and $F(\T\times\{0\})\cap(\T\times\{-1\})$
have positive measure.\\

Once this is done, it is clear that $F$ will be ergodic. Actually, let $A\subset\T\times\Z$ be
$F$-invariant with positive measure. We can assume that $A$ has positive measure
when restricted to the fiber $\T\times\{0\}$. By Step 1, $A$ has full measure in $\T\times\{0\}$.
By Step 2, $A$ has also positive measure in both fibers $\T\times\{1\}$ and $\T\times\{-1\}$.
Applying repeatedly Steps 1 and 2, we conclude that $A$ has full measure in $\T\times\Z$.
Step 1 will follow from the next

\begin{lemma}\label{lemma - ergodicity 1}
Let $A_1,A_2\subset\T\times\{0\}$ have positive $\mu$-measure. Then there is $n\ge 1$ such that
the intersection $F^nA_1\cap A_2$ has positive $\mu$-measure.
\end{lemma}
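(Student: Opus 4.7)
The lemma amounts to showing $\la(E_n)>0$ for some $n\ge 1$, where $A_i^0\subset\T$ are the fiber projections and
$$E_n:=\bigl\{x\in A_1^0:x+n\al\in A_2^0,\ S_n\phi(x)=0\bigr\}.$$
My plan is a two-scale construction: a ``small'' iterate $n_0$ that produces the rotational alignment carrying $A_1^0$ onto $A_2^0$ under $R_\al^{n_0}$, combined with a large iterate $q_{N'}$ whose action on the $\Z$-coordinate is a near-identity on most of $\T$.

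By the Lebesgue density theorem I may replace $A_1,A_2$ by subsets concentrating with density close to one in small intervals $I_1,I_2\subset\T$ of common length $r>0$. Using condition (CF5), for $N$ large enough (so that $(2q_N)^{-2}<r/4$) the orbit $\{n_0\al:1\le n_0\le q_{N+1}\}$ is $(2q_N)^{-2}$-dense in $\T$, so I can pick $n_0\le q_{N+1}$ with $n_0\al$ within $r/4$ of the required translation between the centers of $I_1$ and $I_2$. This yields a rotational overlap estimate $\la(A_1^0\cap R_\al^{-n_0}A_2^0)\ge c$ for some explicit $c=c(r)>0$.

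The telescoping identity of Section \ref{section - construction of function} gives
$$S_n\phi(x)=\tfrac12\sum_{j\ge 1}\bigl[T_j(x+n\al)-T_j(x)\bigr],$$
and by Lemma \ref{lemma - auxiliary 1}, since $S_n\phi$ is integer-valued, $\la\{S_n\phi\ne 0\}\le 2\sum_j\|q_j n\al\|$. For $n=q_{N'}$ one checks (splitting $j\le N'$ vs. $j>N'$ and using divisibility to write $\|q_j q_{N'}\al\|$ as $q_j\|q_{N'}\al\|$ or $q_{N'}\|q_j\al\|$ respectively, then invoking (CF1), (CF3), (CF4) and the hypothesis $q_{N'}\|q_{N'}\al\|\to 0$) that this quantity tends to zero as $N'\to\infty$. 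Setting $n:=n_0+q_{N'}$, the cocycle identity $S_n\phi(x)=S_{n_0}\phi(x)+S_{q_{N'}}\phi(x+n_0\al)$ together with $|n\al-n_0\al|=\|q_{N'}\al\|\to 0$ shows that the alignment above is essentially preserved while the ``tail'' part of the cocycle is killed.

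The main obstacle is the residual term $\la\{S_{n_0}\phi\ne 0\}$ in
$$\la(E_n)\ \ge\ c-\la\{S_{n_0}\phi\ne 0\}-\la\{S_{q_{N'}}\phi\ne 0\}-O(\|q_{N'}\al\|):$$
since $n_0$ can be as large as $q_{N+1}$, the naive $L^1$ bound $n_0\cdot\la\{\phi\ne 0\}$ is much too crude. This is precisely where the random-walk machinery of Appendix \ref{appendix random walk} should intervene, viewing $S_n\phi$ as a random walk driven by the rotation and providing distributional control on $S_{n_0}\phi$ sharper than the crude estimate above, so that $\la\{S_{n_0}\phi\ne 0\}$ can be forced below $c/2$. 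Once this quantitative control is in place, choosing $N$ and then $N'$ sufficiently large produces $\la(E_n)\ge c/2>0$, completing the proof.
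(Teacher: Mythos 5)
You correctly identify both ingredients that the argument needs — a rotational alignment shift provided by (CF5), and some mechanism to make the cocycle $S_k(\al,\phi)$ vanish on the resulting overlap set — and you honestly flag the central difficulty: the shift $k$ needed for alignment can be as large as $q_{n+1}$, and no global estimate on $\la\{S_k(\al,\phi)\ne 0\}$ is available at that scale. But the proposed resolution, pushing $\la\{S_{n_0}\phi\ne 0\}$ below $c/2$ via ``random-walk distributional control,'' is not what Appendix~\ref{appendix random walk} supplies, and I don't think it can be made to work: the telescoping identity $S_{n_0}(\al,\phi)(x)=\tfrac12\sum_j[T_j(x+n_0\al)-T_j(x)]$ shows that for a typical $n_0$ (not a continuant), $\|q_jn_0\al\|$ is of order one for many $j$, so $S_{n_0}(\al,\phi)$ is nonzero on a set of measure close to $1$, not close to $0$. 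Appendix~\ref{appendix random walk} is about the level-crossing property of the sequence $(m_n(x)-m_n(y))_n$ for fixed $x,y$, not about the measure of $\{S_{n_0}\phi\ne 0\}$.

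The paper sidesteps this obstruction entirely by never asking $S_k(\al,\phi)$ to vanish on a large set, only on the tiny overlap set, which it arranges to lie inside a single plateau $I_n(x_i)$ of $T_n$ where the partial sum $m_n=T_1+\cdots+T_n$ is \emph{constant}. Concretely: it picks density points $x_1,x_2$, shrinks $A_i$ to $\tilde A_i\subset I_n(x_i)$ of radius $(2q_n)^{-2}$, and notes that for $x\in\tilde A_1\cap R_\al^{-k}\tilde A_2$ with $k\le q_{n+1}$ and $x\in\Lambda_n$, one has $S_k(\al,\phi)(x)=m_n(x+k\al)-m_n(x)=m_n(x_2)-m_n(x_1)$, independent of $x$. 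The level-crossing property (Lemma~\ref{lemma random walk}) supplies infinitely many $n$ with $m_n(x_1)=m_n(x_2)$, and (CF5) then furnishes the shift $k\le q_{n+1}$ aligning $\tilde A_1$ with $\tilde A_2$ at scale $(2q_n)^{-2}$. So the missing idea in your proposal is precisely this plateau-level localization combined with level crossing of $m_n$ at the two density points; without it, the ``small $+$ large'' decomposition of $n$ has no way to control the small part.
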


To prove Lemma \ref{lemma - ergodicity 1}, we will localize $A_1$ and $A_2$ to subsets in
which $\phi$\ and $\phi_n$ coincide, and actually their Birkhoff sums up to the order $q_{n+1}$.
Letting $\mathcal D=\{0,1/2\}$, this set is defined as
$$\Lambda_n=\left\{x\in\T\,;\,d(q_jx,\mathcal D)>q_j\|q_j\al\|\text{ for }j>n\right\}.$$
Note that
$$d(q_j(x+k\al),q_jx)=\|kq_j\al\|=k\|q_j\al\|\le q_j\|q_j\al\|$$
whenever $j>n$ and $k=1,\ldots,q_{n+1}$. This implies that
$$F^k(x,0)=(x+k\al,S_k(\al,\phi_n)(x))\ \ ,\ x\in\Lambda_n,\ k=1,\ldots,q_{n+1}.$$
Observe that the $\Lambda_n$'s form an ascending chain of subsets of $\T$ and that
$\T\backslash\Lambda_n$ has Lebesgue measure at most $\sum_{j>n}q_j\|q_j\al\|$.
We can suppose, after passing to a subsequence\footnote{Here is where Theorem \ref{thm 1}
requires that $\liminf_{q\rightarrow\infty} q\|q\al\|=0$.}, that this sum is smaller than $2^{-n}$.

\begin{proof}[Proof of Lemma \ref{lemma - ergodicity 1}]
Define the set
$$\Sigma_n=\left\{x\in\T\,;\,d(x,\partial I_j(x))>\left(\dfrac{1}{2q_j}\right)^2\text{ for }j>n\right\}.$$
The sequence $(\Sigma_n)$ also forms an ascending chain of subsets of $\T$ and\footnote{For each plateau
of $T_j$, we remove two intervals of length $\left(\frac{1}{2q_j}\right)^2$. As $T_j$ has $2q_j$ plateaux,
the estimate is correct.}
$$\la(\T\backslash\Sigma_n)\le \sum_{j>n}\dfrac{1}{q_j}\,\cdot$$
This together with the fact that $\la(\Lambda_n),\la(\Omega_n^\infty)\rightarrow 1$ as $n\rightarrow\infty$
allows us to take $n_0\ge 1$ large enough and assume that
\begin{enumerate}
\item[(i)] $A_1\subset\Lambda_{n_0}$,
\item[(ii)] $A_1,A_2\subset\Sigma_{n_0}$ and
\item[(iii)] $A_1,A_2\subset \Omega_{n_0}^\infty$.
\end{enumerate}
By the Lebesgue differentiation theorem, let $x_1,x_2$ be points of density for $A_1,A_2$, respectively.
Now choose $n_1\ge 1$ large enough (see Subsection \ref{sub branches and plateaux}) such that
\begin{enumerate}
\item[(iv)] $\bigcap_{j=1}^n I_j(x_i)=I_n(x_i)$ for every $n\ge n_1$ and $i=1,2$.
\end{enumerate}
Finally, let $n\ge n_0,n_1$ such that
\begin{enumerate}
\item[(v)] $m_n(x_1)=m_n(x_2)$ and
\item[(vi)] $\la\left(A_i\cap\left(x_i-\left(\frac{1}{2q_n}\right)^2,x_i+
\left(\frac{1}{2q_n}\right)^2\right)\right)>\frac{3}{4}\cdot 2\left(\frac{1}{2q_n}\right)^2$ for $i=1,2$.
\end{enumerate}
The existence of such $n$ is assured by Lemma \ref{lemma random walk} and the fact that $x_i$ is
a point of density for $A_i$. For simplicity, let
$$\tilde A_i=A_i\cap\left(x_i-\left(\frac{1}{2q_n}\right)^2,x_i+\left(\frac{1}{2q_n}\right)^2\right),\ \ i=1,2.$$
By (ii), $\tilde A_i\subset I_n(x_i)$. Now use (CF5) to choose $k\in\{1,2,\ldots,q_{n+1}\}$ such that
\begin{equation}\label{ergo - eq 2}
d(x_1+k\al,x_2)<\left(\dfrac{1}{2q_n}\right)^2\,\cdot
\end{equation}

\noindent The proof of the lemma will follow from the next two claims.\\

\noindent {\bf Claim 1.} The set $(\tilde A_1+k\al)\cap\tilde A_2\subset\T$ has positive Lebesgue measure.\\

Indeed, (\ref{ergo - eq 2}) implies that the union $(\tilde A_1+k\al)\cup\tilde A_2$ is contained in an
interval of length $3\cdot\left(\frac{1}{2q_n}\right)^2$ and so, by (vi),
\begin{eqnarray*}
\la((\tilde A_1+k\al)\cap\tilde A_2)&=&\la(\tilde A_1+k\al)+\la(\tilde A_2)-\la((\tilde A_1+k\al)\cup\tilde A_2)\\
                &>&\frac{3}{2}\cdot\left(\dfrac{1}{2q_n}\right)^2+\frac{3}{2}\cdot\left(\dfrac{1}{2q_n}\right)^2
                   -3\cdot\left(\dfrac{1}{2q_n}\right)^2\\
                &=&0\,.
\end{eqnarray*}

\noindent {\bf Claim 2.} The set $F^k(\tilde A_1\times\{0\})\cap(\tilde A_2\times\{0\})\subset\T\times\Z$
has positive $\mu$-measure.\\

It is enough to prove that $S_k(\al,\phi)(x)=0$ for every $x$ satisfying Claim 1.
By (i), $x\in\Lambda_{n_0}\subset\Lambda_n$ and so
$$S_k(\al,\phi)(x)=S_k(\al,\phi_n)(x)=m_n(x+k\al)-m_n(x).$$
Observe that
\begin{enumerate}[$\bullet$]
\item $x\in\tilde A_1\subset I_n(x_1)$ and so (iv) guarantees that $m_n(x)=m_n(x_1)$.
\item $x+k\al\in\tilde A_2\subset I_n(x_2)$. Using (iv) again, $m_n(x+k\al)=m_n(x_2)$.
\end{enumerate}
By assumption (v) it follows that $S_k(\al,\phi)(x)=0$ for every $x$ satisfying Claim 1.
This concludes the proof of Claim 2 and also from the lemma.
\end{proof}

We thus obtained Step 1. Step 2 follows from Lemma \ref{lemma - auxiliary 1}. Indeed,
for $s\in\{-1,1\}$, the set of points $x\in\T$ such that
\begin{enumerate}[$\bullet$]
\item $T_1(x+\al)=T_1(x)+2s$ and
\item $T_j(x+\al)=T_j(x)$ for $j>1$
\end{enumerate}
has Lebesgue measure at least $\|q_1\al\|-2\sum_{j>1}\|q_j\al\|$, which is positive by (CF1).
This concludes the proof of ergodicity.

\begin{remark}
The argument of this section indeed shows that $F$ is {\it regular} in the sense of \cite{Sc77}.
\end{remark}

\section{Counting the number of returns}\label{section - number of returns}

Throughout this and the next section, we assume $\al$ is divisible and its subsequence $(q_n)$ of
continuants satisfies
$$2q_n\text{ divides }q_{n+1}\ \ \text{ and }\ \ \lim_{n\rightarrow\infty}q_n\|q_n\al\|=0.$$

Let $A=\T\times\{0\}$. The purpose of this section is to count the number of returns of an arbitrary
point $(x,0)\in A$ to $A$ via the map $F$. More specifically, identifying $A$ with $\T$, we want to
investigate the function $S^F_{q_{n+1}}:\T\rightarrow\N$ defined as
$$S^F_{q_{n+1}}(x)=\sum_{k=1}^{q_{n+1}}(\ind_A\circ F^k)(x,0).$$
In the next section we will apply the estimates obtained here to establish Theorem \ref{thm 1}.

As remarked before, we will not directly calculate $S^F_{q_{n+1}}$. Instead, we consider the rational
truncated versions of $F$ defined by the skew product
$$
\begin{array}{rcrcl}
\tilde F_n&:&\T\times\Z&\longrightarrow &\T\times\Z\\
          & & (x,y)    &\longmapsto     &(x+\al_{n+1},y+\tilde\phi_n(x)),
\end{array}
$$
where $\tilde\phi_n$ is given by (\ref{convergence - eq 2}), and calculate the value of
$S^{\tilde F_n}_{q_{n+1}}:\T\rightarrow\N$ given by
$$S^{\tilde F_n}_{q_{n+1}}(x)=\sum_{k=1}^{q_{n+1}}(\ind_A\circ{\tilde F_n}^k)(x,0).$$
By approximation, $S^F_{q_{n+1}}$ and $S^{\tilde F_n}_{q_{n+1}}$ coincide
for a large subset of $\T$ and then we will have the value of the former function in this large set.

This section is organized as follows. In Subsection \ref{sub counting lemma}, we calculate the
distribution of $S^{\tilde F_n}_{q_{n+1}}$. After that, Subsection \ref{sub returns for F}
establishes the distribution of $S^F_{q_{n+1}}$.

\subsection{The function $S^{\tilde F_n}_{q_{n+1}}$}\label{sub counting lemma}

Observe that
$${\tilde F_n}^k(x,0)=(x+k\al_{n+1},S_k(\al_{n+1},\tilde \phi_n)(x))$$
so that ${\tilde F_n}^k(x,0)$ belongs to $A$ if and only if
$$S_k(\al_{n+1},\tilde\phi_n)(x)=0\ \iff\ m_n(x+k\al_{n+1})=m_n(x).$$
Then
$$S^{\tilde F_n}_{q_{n+1}}(x)=\#\{1\le k\le q_{n+1}\,;\,m_n(x+k\al_{n+1})=m_n(x)\}.$$

The idea to calculate the above cardinality is: for each sequence
${\bf s}=(s_1,\ldots,s_n)\in\{-1,1\}^n$, consider the set
$$B_{\bf s}=\{1\le k\le q_{n+1}\,;\, T_j(x+k\al_{n+1})=s_j\text{ for }j=1,\ldots,n\}.$$
If we manage to prove that each $B_{\bf s}$ has the same cardinality (independent of ${\bf s}$),
it must be equal to $q_{n+1}/2^n$. Then
\begin{eqnarray*}
S^{\tilde F_n}_{q_{n+1}}(x)&=&\sum_{{\bf s}\in\{-1,1\}^n\atop{s_1+\cdots+s_n=m_n(x)}}\#B_{\bf s}\\
                           &=&\dfrac{q_{n+1}}{2^n}\cdot\#\{{\bf s}\in\{-1,1\}^n\,;\,s_1+\cdots+s_n=m_n(x)\}
\end{eqnarray*}
and so
\begin{equation}\label{counting lemma - eq 1}
S^{\tilde F_n}_{q_{n+1}}(x)=\dfrac{q_{n+1}}{2^n}{n\choose{\frac{n+m_n(x)}{2}}}\cdot
\end{equation}
This is indeed the case. Roughly speaking, we prove that each $B_{\bf s}$ has the same cardinality by
interpreting $m_n(x)$ as a random walk. More specifically, we consider the intermediate sets
$$B_{(s_1,\ldots,s_i)}=\{1\le k\le q_{n+1}\,;\, T_j(x+k\al_{n+1})=s_j\text{ for }j=1,\ldots,i \}$$
and associate to them a binary tree as follows:
\begin{enumerate}[$\bullet$]
\item The root of the tree is $B=\{1,2,\ldots,q_{n+1}\}$.
\item $B_{(s_1,\ldots,s_i)}$ has exactly two descendants: $B_{(s_1,\ldots,s_i,1)}$ and $B_{(s_1,\ldots,s_i,-1)}$.
\end{enumerate}
Observe that
$$B_{(s_1,\ldots,s_i)}=B_{(s_1,\ldots,s_i,1)}\sqcup B_{(s_1,\ldots,s_i,-1)}$$
so that, at each level $i$, the union of the $B_{(s_1,\ldots,s_i)}$'s is equal to $B$. We will prove that,
in each subdivision of $B_{(s_1,\ldots,s_i)}$, half of the elements belong to $B_{(s_1,\ldots,s_i,1)}$
and the other half to $B_{(s_1,\ldots,s_i,-1)}$. Once this is done, (\ref{counting lemma - eq 1}) will
be established.

\begin{center}
\psset{unit=.65cm} \begin{pspicture}(-9,-1.7)(9,7)

\psline{*->}(0,6)(-5,4)\psline{*->}(0,6)(5,4)
\psline{*->}(-5,4)(-7,2)\psline{*->}(-5,4)(-3,2)\psline{*->}(5,4)(3,2)\psline{*->}(5,4)(7,2)
\psline{*->}(-7,2)(-8,0)\psline{*->}(-7,2)(-6,0)\psline{*->}(-3,2)(-4,0)\psline{*->}(-3,2)(-2,0)
\psline{*->}(3,2)(2,0)\psline{*->}(3,2)(4,0)\psline{*->}(7,2)(6,0)\psline{*->}(7,2)(8,0)
\psline{*-*}(-8,0)(-8,0)\psline{*-*}(-6,0)(-6,0)\psline{*-*}(-4,0)(-4,0)\psline{*-*}(-2,0)(-2,0)
\psline{*-*}(8,0)(8,0)\psline{*-*}(6,0)(6,0)\psline{*-*}(4,0)(4,0)\psline{*-*}(2,0)(2,0)

\uput[90](0,6){$B$}\uput[180](-5,4){$B_{1}$}\uput[0](5,4){$B_{-1}$}
\uput[180](-7,2){$B_{(1,1)}$}\uput[0](-3,2){$B_{(1,-1)}$}
\uput[180](3,2){$B_{(-1,1)}$}\uput[0](7,2){$B_{(-1,-1)}$}
\uput[-90](-7,.5){$\vdots$}\uput[-90](-3,.5){$\vdots$}
\uput[-90](7,.5){$\vdots$}\uput[-90](3,.5){$\vdots$}
\uput[-90](0,-.8){Figure 3: the binary tree.}

\end{pspicture}\end{center}

Fix $x\in\T$. The idea is to see $k$ as a variable $z\in\R$ and to prove that the evaluations of the functions
$z\mapsto T_j(\al_{n+1}z+x)$, $j=1,2,\ldots,n$, along the integers $1,2,\ldots,q_{n+1}$ satisfy the
required binary property. Each of these functions is periodic, with period equal to
$$\pi_j=\dfrac{1}{q_j\al_{n+1}}=\dfrac{q_{n+1}/q_j}{p_{n+1}}=:\dfrac{u_j}{v}\,\cdot$$
Better than this, consider the functions given by the composition with the dilation $z\mapsto z/v$,
defined as
\begin{equation}\label{counting lemma - eq 3}
\begin{array}{rcccl}
\psi_j&:&\R&\longrightarrow &\R\\
      & &z &\longmapsto     &T\left(\dfrac{z}{u_j}+q_jx\right)\\
\end{array},\ \ j=1,2,\ldots,n\,,\\
\end{equation}
whose period is equal to $u_j\in\Z$. We thus want to investigate $\psi_1,\ldots,\psi_n$ along
the integers $v,2v,\ldots,u_1v$. Observe that
\begin{enumerate}[$\bullet$]
\item $\{v,2v,\ldots,u_1v\}$ is a complete residue system modulo $u_1$,
\item $u_n$ is even and $u_j$ is a multiple of $2u_{j+1}$ for $j=1,\ldots,n-1$, and
\item for a set $x\in\T$ of full Lebesgue measure, $\psi_1,\ldots,\psi_n$ are continuous in $\Z$ (i.e.
none of their discontinuities is an integer).
\end{enumerate}
These are the assumptions we make below.

\begin{proposition}\label{proposition - counting}
Let $\psi_j:\R\rightarrow\R$ be a periodic function with period $u_j\in\Z$, $j=1,\ldots,n$. Assume that
\begin{itemize}
\item[(a)] $u_n$ is even and $u_j$ is a multiple of $2u_{j+1}$ for $j=1,\ldots,n-1$, and
\item[(b)] there are $z_1,\ldots,z_n\in\R\backslash\Q$ such that
$$\psi_j|_{\left[z_j,z_j+\frac{u_j}{2}\right)}\equiv 1\ \text{ and }\
\psi_j|_{\left[z_j+\frac{u_j}{2},z_j+u_j\right)}\equiv -1$$
for $j=1,\ldots,n$.
\end{itemize}
Let $R$ be a complete residue system modulo $u_1$. Then, for any sequence
$(s_1,\ldots,s_n)\in\{-1,1\}^n$,
$$\#\{k\in R\,;\,\psi_j(k)=s_j\text{ for }j=1,\ldots,n\}=\dfrac{u_1}{2^n}\,\cdot$$
\end{proposition}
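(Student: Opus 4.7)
The plan is induction on $n$. The key preliminary observation is that all periods $u_j$ divide $u_1$ (by iterating condition (a), since $u_j \mid 2u_j \mid u_{j-1} \mid \cdots \mid u_1$), so the joint indicator $k \mapsto \prod_{j=1}^n \ind_{\{\psi_j(k)=s_j\}}$ depends only on $k$ modulo $u_1$. The sought cardinality is therefore independent of the choice of complete residue system $R$, and I may replace $R$ by the integers lying in $[z_1, z_1+u_1)$, which by the irrationality of $z_1$ contains exactly $u_1$ integers.

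For the base case $n=1$, condition (a) forces $u_1$ to be even, and (b) says that $\psi_1 \equiv s_1$ on a sub-interval of $[z_1, z_1+u_1)$ of integer length $u_1/2$ with irrational endpoints. Such an interval contains exactly $u_1/2$ integers, yielding the count $u_1/2^1$.

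For the inductive step, fix $s_1 \in \{-1,1\}$ and isolate, as in the base case, the $u_1/2$ integers in $[z_1, z_1+u_1)$ on which $\psi_1 = s_1$. These occupy a sub-interval of integer length $u_1/2 = m u_2$, where $m := u_1/(2u_2) \in \N$ by (a), and its endpoints are again irrational (they differ from $z_1$ by an integer). Consequently, those $u_1/2$ consecutive integers reduce modulo $u_2$ to exactly $m$ complete residue systems mod $u_2$. Since $\psi_2,\ldots,\psi_n$ have periods dividing $u_2$, their joint constraints depend only on residue mod $u_2$, so the inductive hypothesis, applied to $(\psi_2,\ldots,\psi_n)$ on each of these $m$ complete residue systems, produces $u_2/2^{n-1}$ valid integers per system, totalling $m \cdot u_2/2^{n-1} = u_1/2^n$, as desired.

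The only real subtlety is the fencepost bookkeeping at the endpoints: the irrationality of the $z_j$ is precisely what ensures that an interval of integer length contains exactly that many integers, so no off-by-one errors arise when one halves a period or reduces modulo the next-coarser period. Everything else is a clean divisibility-plus-induction argument mirroring the balanced binary-tree subdivision that the paper sketches just above the proposition.
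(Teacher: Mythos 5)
Your proof is correct and follows the same inductive, divisibility-driven counting strategy that the paper uses, so the underlying ideas coincide. The organizational difference is worth noting: the paper isolates the base case as a separate lemma (Lemma~\ref{lemma - auxiliary 4}), spells out the $n=2$ case in terms of the residue sets $\Psi^j_\pm\subset\Z_{u_j}$, and waves at the general step as ``following the same lines,'' whereas you give a uniform reduction from $n$ to $n-1$: fix $s_1$, localize to the $u_1/2$ consecutive integers on which $\psi_1=s_1$, note that these cover every residue class modulo $u_2$ exactly $u_1/(2u_2)$ times (this is exactly where $2u_2\mid u_1$ is used), and invoke the inductive hypothesis for $\psi_2,\ldots,\psi_n$ modulo $u_2$. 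That makes the induction fully explicit for all $n$, which is a modest gain in rigor over the paper's sketch. One small correction to your closing remark: it is not the irrationality of the $z_j$ that prevents off-by-one errors --- for any real $a$ and any positive integer $m$, the interval $[a,a+m)$ contains exactly $m$ integers --- rather, the exact halving comes from hypothesis~(a), which forces $u_j/2$ and $u_1/(2u_2)$ to be integers. The irrationality assumption is inherited from the application (generic $x$, so that no discontinuity of any $\psi_j$ is an integer) and is not what makes the counts come out right.
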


The proof is by induction on $n$. Let us give an idea of why this must be true. Assume that $x=0$
and that, instead of being interested in the behavior of $\psi_1,\ldots,\psi_n$ along integers,
we want to compute the Lebesgue measure of the set
\begin{equation}\label{counting lemma - eq 5}
\{z\in [0,u_1)\,;\,\psi_j(k)=s_j\text{ for }j=1,\ldots,n\}.
\end{equation}
For $n=1$, we have
\begin{eqnarray*}
\{z\in [0,u_1)\,;\,\psi_1(k)=1\}&=&\left[0,\frac{u_1}{2}\right)\\
\{z\in [0,u_1)\,;\,\psi_1(k)=-1\}&=&\left[\frac{u_1}{2},u_1\right).
\end{eqnarray*}
For $n=2$, observe that in both intervals $\left[0,\frac{u_1}{2}\right)$, $\left[\frac{u_1}{2},u_1\right)$ the
function $\psi_2$ alternately changes sign at each interval of length $u_2/2$ so that, for any $s_1,s_2\in\{-1,1\}$,
$\{z\in [0,u_1)\,;\,\psi_j(k)=s_j\text{ for }j=1,2\}$ is the union of $u_1/2u_2$ intervals of length $u_2/2$.
For arbitrary $n$, (\ref{counting lemma - eq 5}) is the union of $u_1/2^{n-1}u_n$ intervals of length $u_{n}/2$
each and so its Lebesgue measure is equal to $u_1/2^n$. Proposition \ref{proposition - counting} is nothing
but a discrete version of this. In order to prove it, we just have to make sure that none of the discontinuities
of $\psi_1,\ldots,\psi_n$ are integer. This is accomplished by condition (b).

The next auxiliary lemma constitutes the basis of induction.

\begin{lemma}\label{lemma - auxiliary 4}
Let $\psi:\R\rightarrow\R$ be a function with period $u\in\Z$ such that
\begin{itemize}
\item[(a)] $u$ is even and
\item[(b)] there is $z\in\R\backslash\Q$ such that
$$\psi|_{\left[z,z+\frac{u}{2}\right)}\equiv 1\ \text{ and }\
\psi|_{\left[z+\frac{u}{2},z+u\right)}\equiv -1.$$
\end{itemize}
Let $R$ be a complete residue system modulo $u$. Then
$$\#\{k\in R\,;\,\psi(k)=1\}=\#\{k\in R\,;\,\psi(k)=-1\}=\dfrac{u}{2}\,\cdot$$
\end{lemma}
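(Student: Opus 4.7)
The plan is to exploit the reflection symmetry built into $\psi$: since $\psi$ equals $+1$ on a half-period interval and $-1$ on the next half-period interval, translation by $u/2$ should swap the two level sets. Because $u$ is even, $u/2$ is an integer, so this translation is well defined on the quotient $\Z/u\Z$.

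First I would observe that $\psi$ descends to a function $\Z/u\Z \to \R$ because $\psi$ has integer period $u$, so for the counting problem it does not matter which complete residue system $R$ we take; we may as well count in $\Z/u\Z$. Next I would check that $\psi$ actually takes only the values $\pm 1$ on integers. The discontinuities of $\psi$ lie in the set $z + \tfrac{u}{2}\Z$, and since $z\in\R\setminus\Q$ while $\tfrac{u}{2}\in\Z$, none of these points is an integer. Thus every integer falls in the interior of one of the constancy intervals of $\psi$, so $\psi(k)\in\{-1,1\}$ for all $k\in\Z$, and
$$\#\{k\in R : \psi(k)=1\} + \#\{k\in R : \psi(k)=-1\} = \#R = u.$$

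The crux is then to produce a bijection between the two sides. Consider the map $\sigma:\Z/u\Z\to\Z/u\Z$ defined by $\sigma(k)=k+\tfrac{u}{2}\pmod u$; this is an involution. I claim $\sigma$ exchanges the sets $\{\psi=1\}$ and $\{\psi=-1\}$. Indeed, any integer $k$ lies in a unique translate $[z+ju, z+\tfrac{u}{2}+ju)$ or $[z+\tfrac{u}{2}+ju, z+u+ju)$ for some $j\in\Z$, on which $\psi$ is respectively $+1$ or $-1$; adding $u/2$ to $k$ moves it from the first type of interval into the second (or vice versa) after adjusting by a multiple of $u$, and again no integer lands on a boundary point by the irrationality of $z$. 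Hence $\psi(k+u/2)=-\psi(k)$ for every $k\in\Z$, so $\sigma$ gives the desired bijection, and each level set has cardinality $u/2$.

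The only step that requires care is the boundary issue, namely verifying that no integer coincides with a jump of $\psi$; this is the reason condition (b) demands $z\notin\Q$ rather than merely $z\in\R$. Once that is in hand, the proof is essentially the symmetry argument above, and no further estimates are needed.
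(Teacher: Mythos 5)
Your proof is correct. You take a slightly different route from the paper: instead of directly counting the integers in each half-period interval (the paper observes that $[z,z+\frac{u}{2})$ and $[z+\frac{u}{2},z+u)$ each contain exactly $u/2$ integers because they are half-open intervals of integer length, and that together these integers form a complete residue system modulo $u$), you construct the explicit involution $\sigma(k)=k+\frac{u}{2}\pmod{u}$ and verify $\psi(k+\frac{u}{2})=-\psi(k)$, which exhibits a bijection between the two level sets. Both arguments are elementary and rest on the same two facts — that irrationality of $z$ keeps integers away from the jump points, and that $u/2$ is an integer — so the content is the same, but your involution argument is a clean alternative to the paper's direct count. One small remark: you correctly identify that the role of $z\notin\Q$ is to guarantee no integer hits a discontinuity; in fact, since the constancy intervals are half-open, $\psi$ is well-defined with values in $\{\pm 1\}$ at every integer even for rational $z$, so the hypothesis is stronger than strictly needed for this lemma (it is used to streamline the inductive step of Proposition \ref{proposition - counting}).
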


\begin{proof}
Consider the sets
$$
\begin{array}{rcl}
\Psi_{+}&=&\left\{i\in\Z\,;\,i\in\left[z,z+\dfrac{u}{2}\right)\right\}\hspace{.6cm}\pmod u
\ \ \text{and}\\
&&\\
\Psi_{-}&=&\left\{i\in\Z\,;\,i\in\left[z+\dfrac{u}{2},z+u\right)\right\}\pmod u\,.
\end{array}
$$
It is clear that $\Psi_{+}\cup\Psi_{-}=\Z_u$ and that
$\#\Psi_{+}=\#\Psi_{-}=u/2$. Also, $\psi(k)=1$ if and only if
$k\equiv i\pmod u$ for some $i\in\Psi_{+}$. Because $R$ is a complete residue system module $u$,
the lemma is proved.
\end{proof}

\begin{proof}[Proof of Proposition \ref{proposition - counting}]
The basis of induction is Lemma \ref{lemma - auxiliary 4}. It remains to prove the inductive step.
We will do the case $n=2$, as the general inductive step follows the same lines of ideas,
except that more notation would have to be introduced.

Let $\psi_1,\psi_2:\R\rightarrow\R$ be two functions satisfying the conditions of the proposition.
For $j=1,2$, consider the equipartition of $\Z_{u_j}$ by the subsets
$$
\begin{array}{rcl}
\Psi_{+}^j&=&\left\{i\in\Z\,;\,i\in\left[z_j,z_j+\dfrac{u_j}{2}\right)\right\}\hspace{.8cm}\pmod {u_j}\ \ \text{ and}\\
&&\\
\Psi_{-}^j&=&\left\{i\in\Z\,;\,i\in\left[z_j+\dfrac{u_j}{2},z_j+u_j\right)\right\}\pmod {u_j}\,.
\end{array}
$$
For $s_1,s_2\in\{-1,1\}\cong\{-,+\}$,
$$
\left\{
\begin{array}{c}
\psi_1(k)=s_1\\
             \\
\psi_2(k)=s_2
\end{array}\right.
\ \iff\
\left\{
\begin{array}{c}
k\equiv i_1\pmod {u_1}\ \text{ for }i_1\in\Psi_{s_1}^1\\
                                                            \\
\,k\equiv i_2\pmod {u_2}\ \text{ for }i_2\in\Psi_{s_2}^2.
\end{array}\right.
$$
Because $u_2$ divides $u_1$, residue classes module $u_1$ define residue classes module $u_2$. This implies
that the above congruences are equivalent to
$$
\left\{
\begin{array}{c}
\,k\equiv i_1\pmod {u_1}\ \text{ for }i_1\in\Psi_{s_1}^1\\
                                                       \\
i_1\equiv i_2\pmod {u_2}\ \text{ for }i_2\in\Psi_{s_2}^2
\end{array}\right.
$$
and then we want to count the cardinality of the set
\begin{equation}\label{counting lemma - eq 2}
\left\{k\in R\,;\,
\begin{array}{c}
\,k\equiv i_1\pmod {u_1}\ \text{ for }i_1\in\Psi_{s_1}^1\\
i_1\equiv i_2\pmod {u_2}\ \text{ for }i_2\in\Psi_{s_2}^2
\end{array}\right\}\cdot
\end{equation}
Each residue class modulo $u_2$ is equal to the
union of $u_1/u_2$ residue classes modulo $u_1$. More specifically,
$$i_1\equiv i_2\pmod{u_2}\iff i_1\equiv i_2,i_2+u_2,\ldots,i_2+(u_1-u_2)\pmod{u_1}$$
so that (\ref{counting lemma - eq 2}) is equal to the union
$$\bigcup_{i_2\in\Psi_{s_2}^2}\{i_2,i_2+u_2,\ldots,i_2+(u_1-u_2)\}\cap\Psi_{s_1}^1.$$
Independent of $i_2$, half of the residue classes $i_2,i_2+u_2,\ldots,i_2+(u_1-u_2)$ modulo $u_1$ belong
to $\Psi_{+}^1$ and half to $\Psi_{-}^1$. Thus
\begin{eqnarray*}
\#\{k\in R\,;\,\psi_1(k)=s_1\text{ and }\psi_2(k)=s_2\}&=&\#\Psi_{s_2}^2\cdot\dfrac{u_1}{2u_2}\\
                                                              &=&\dfrac{u_2}{2}\cdot\dfrac{u_1}{2u_2}\\
                                                              &=&\dfrac{u_1}{4}\,,
\end{eqnarray*}
where in the second equality we used Lemma \ref{lemma - auxiliary 4}.
\end{proof}

In our context, Proposition \ref{proposition - counting} is translated to

\begin{lemma}\label{proposition - returns for rational}
For every $m\in\{-n,\ldots,n\}$ with the same parity of $n$,
$$S^{\tilde F_n}_{q_{n+1}}(x)=\dfrac{q_{n+1}}{2^n}{n\choose{\frac{n+m}{2}}}$$
for a set of $x\in\T$ of Lebesgue measure ${n\choose\frac{n+m}{2}}/2^n$.
\end{lemma}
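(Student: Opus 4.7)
The plan is to assemble the lemma from the two computations that are essentially already set up in Subsection \ref{sub counting lemma}: the cardinality formula coming from the binary tree argument, and a separate computation of the Lebesgue measure of the level sets of $m_n$.

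First I would verify that, for a.e. $x\in\T$, the functions $\psi_1,\ldots,\psi_n$ from (\ref{counting lemma - eq 3}) satisfy the hypotheses of Proposition \ref{proposition - counting}. Their periods are $u_j=q_{n+1}/q_j$. Since $\alpha$ is divisible and $2q_j$ divides $q_{j+1}$, one has $u_n=q_{n+1}/q_n$ even and $u_j/u_{j+1}=q_{j+1}/q_j$ an even integer for $j=1,\ldots,n-1$, so hypothesis (a) is satisfied. The discontinuities of $\psi_j$ occur at points of the form $u_j(k/2-q_jx)$, $k\in\Z$; excluding the countable set of $x$ for which any of these is an integer gives hypothesis (b). Proposition \ref{proposition - counting} applied with $R=\{v,2v,\ldots,u_1v\}$ (which is a complete residue system modulo $u_1=q_{n+1}$) then yields $\#B_{\mathbf s}=q_{n+1}/2^n$ for every $\mathbf s\in\{-1,1\}^n$. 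Plugging this into (\ref{counting lemma - eq 1}) gives
\[
S^{\tilde F_n}_{q_{n+1}}(x)=\frac{q_{n+1}}{2^n}\binom{n}{(n+m_n(x))/2}
\]
for a.e.\ $x\in\T$.

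Next I would compute $\la(\{x\in\T:m_n(x)=m\})$. The key observation is that the divisibility $2q_j\mid q_{j+1}$ makes the plateaux of $T_{j+1}$ refine those of $T_j$ evenly: each plateau of $T_j$ has length $1/(2q_j)$, contains exactly $q_{j+1}/(2q_j)$ full branches of $T_{j+1}$, and hence contains equally many positive and negative plateaux of $T_{j+1}$. Iterating this, one obtains the independence statement
\[
\la\bigl(\{x\in\T:T_j(x)=s_j\text{ for }j=1,\ldots,n\}\bigr)=\frac{1}{2^n}
\]
for every $\mathbf s=(s_1,\ldots,s_n)\in\{-1,1\}^n$. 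Since $m_n(x)=m$ is equivalent to $\mathbf s$ having exactly $(n+m)/2$ coordinates equal to $+1$, summing over the $\binom{n}{(n+m)/2}$ such sequences yields
\[
\la(\{x\in\T:m_n(x)=m\})=\binom{n}{(n+m)/2}\Big/2^n,
\]
and combining with the previous formula for $S^{\tilde F_n}_{q_{n+1}}$ proves the lemma.

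The serious technical work (the binary tree decomposition, the residue-class juggling in Proposition \ref{proposition - counting}) has already been done in the preceding pages, so no new obstacle arises here. The only thing one has to be careful about is the implicit genericity condition on $x$ needed to apply Proposition \ref{proposition - counting}; this condition excludes a countable set in $\T$ and is therefore harmless for the Lebesgue-measure statement.
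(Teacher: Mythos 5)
Your proposal is correct and follows essentially the same route as the paper: you apply Proposition \ref{proposition - counting} to the functions $\psi_j$ of (\ref{counting lemma - eq 3}) with periods $u_j=q_{n+1}/q_j$, verifying hypotheses (a) and (b) via the divisibility $2q_j\mid q_{j+1}$ and the genericity of $x$ (the latter two bullet points appear explicitly just before Proposition \ref{proposition - counting} in the paper), and then identify the distribution of $m_n$ with an $n$-step simple random walk because the $T_j$ are i.i.d.\ under the divisibility hypothesis, exactly as discussed at the start of Appendix \ref{appendix random walk}. The paper's proof is simply a terser version of what you wrote.
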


\begin{proof}
Let $u_j=q_{n+1}/q_j$ for $j=1,\ldots,n$ and apply Proposition \ref{proposition - counting} to the
functions in (\ref{counting lemma - eq 3}). The random variable $x\in\T\mapsto m_n(x)$ has the same distribution
as the $n$-th step of a simple random walk in $\Z$, and so the equality $m_n(x)=m$ holds in a set of Lebesgue measure
${n\choose\frac{n+m}{2}}/2^n$, for every $m\in\{-n,\ldots,n\}$ with the same parity of $n$.
\end{proof}

\subsection{The function $S^{F}_{q_{n+1}}$}\label{sub returns for F}
It is a matter of fact that $\phi$ and ${\tilde\phi}_n$ coincide in a large set, and actually their
Birkhoff sums up to the order $q_{n+1}$. This set is defined by those
points simultaneously satisfying
\begin{enumerate}
\item[(i)] $T_j(x+k\al)=T_j(x+k\al_{n+1})$ for $j=1,\ldots,n$ and $k=1,\ldots,q_{n+1}$, and
\item[(ii)] $d(q_jx,\mathcal D)>q_j\|q_j\al\|$ for $j>n$.
\end{enumerate}
Call this set $\Lambda_n$. Note that
$$d(q_j(x+k\al),q_jx)=\|kq_j\al\|=k\|q_j\al\|\le q_j\|q_j\al\|$$
whenever $j>n$ and $k=1,\ldots,q_{n+1}$ and so (ii) implies $T_j(x+k\al)=T_j(x)$. This equality
guarantees that
\begin{eqnarray}
F^k(x,0)&=&(x+k\al,S_k(\al,{\tilde \phi}_n)(x))\ \hspace{.5cm}\ \text{for }x\in\Lambda_n,\ k=1,\ldots,q_{n+1}\nonumber \\
\Longrightarrow\hspace{.7cm}S_{q_{n+1}}^F(x)&=&
         S_{q_{n+1}}^{{\tilde F}_n}(x)\hspace{2.8cm}\text{for }x\in\Lambda_n. \label{counting lemma - eq 4}
\end{eqnarray}
By Lemma \ref{lemma - auxiliary 1}, the Lebesgue measure of points not satisfying (i) is at most
$$\sum_{1\le k\le q_{n+1}\atop{1\le j\le n}}\|kq_j(\al-\al_{n+1})\|<
|\al-\al_{n+1}|\cdot q_{n+1}^2\cdot\sum_{j=1}^n q_j<2^{-n-1}\ ,$$
where in the last inequality we used (CF4). The points not satisfying (ii) have Lebesgue measure at
most\footnote{Remember we are assuming $\sum_{j>n}q_j\|q_j\al\|<2^{-n}$.}
$\sum_{j>n}q_j\|q_j\al\|<2^{-n}$ and so
\begin{equation}\label{estimate lambda_n}
\la(\T\backslash\Lambda_n)<2^{-n+1}\,.
\end{equation}
The above estimate will be used in the next section.

\section{Rational ergodicity along $(q_n)$}\label{section - proof of rat ergodicity}

It remains to prove that $F$ satisfies the Renyi inequality along $(q_n)$. This will be obtained
via the estimates of Section \ref{section - number of returns}. More specifically, we first prove,
as a consequence of Lemma \ref{proposition - returns for rational}, that the rational truncated
version $\tilde F_n$ of $F$ satisfies the Renyi inequality in the time $q_{n+1}$, uniformly in $n$.
We then prove that $\|S_{q_{n+1}}^F\|_1\approx\|S_{q_{n+1}}^{\tilde F_n}\|_1$ and
$\|S_{q_{n+1}}^F\|_2\approx\|S_{q_{n+1}}^{\tilde F_n}\|_2$, which allows us to push the Renyi
inequality to $F$.

\subsection{Renyi inequality for $\tilde F_n$}

By Lemma \ref{proposition - returns for rational},
\begin{eqnarray*}
\left\|S^{\tilde F_n}_{q_{n+1}}\right\|_1&=&\displaystyle\int_{\T}S^{\tilde F_n}_{q_{n+1}}d\la\\
      &=&\sum_{-n\le m\le n\atop{m\equiv n(\text{mod }2)}}\left[\dfrac{q_{n+1}}{2^n}{n\choose\frac{n+m}{2}}\right]\cdot
         \left[\dfrac{1}{2^n}{n\choose\frac{n+m}{2}}\right]\\
      &=&\dfrac{q_{n+1}}{2^{2n}}\sum_{i=0}^n{n\choose i}^2\\
      &=&\dfrac{q_{n+1}}{2^{2n}}{2n\choose n}\\
      &\approx&\frac{q_{n+1}}{2^{2n}}\cdot\frac{2^{2n}}{\sqrt{\pi n}}\\
      &=&\dfrac{q_{n+1}}{\sqrt{\pi n}}\ ,
\end{eqnarray*}
where in the fifth passage we used Stirling's formula\footnote{Stirling's formula states that
$n!\approx\sqrt{2\pi n}\left(\frac{n}{e}\right)^n$.} to estimate the central binomial coefficient.
On the other hand,
\begin{eqnarray*}
\left\|S^{\tilde F_n}_{q_{n+1}}\right\|_2^2
            &=&\sum_{-n\le m\le n\atop{m\equiv n(\text{mod }2)}}\left[\dfrac{q_{n+1}}{2^n}{n\choose\frac{n+m}{2}}\right]^2
               \cdot\left[\dfrac{1}{2^n}{n\choose\frac{n+m}{2}}\right]\\
            &=&\dfrac{q_{n+1}^2}{2^{3n}} \sum_{i=0}^n {n\choose i}^3\\
            &\le &\dfrac{q_{n+1}^2}{2^{3n}}{n\choose\frac{n}{2}}\sum_{i=0}^n{n\choose i}^2\\
            &=&\dfrac{q_{n+1}^2}{2^{3n}}{n\choose\frac{n}{2}}{2n\choose n}\\
            &\approx &\sqrt{2}\cdot\dfrac{q_{n+1}^2}{\pi n}
\end{eqnarray*}
and therefore
\begin{equation}\label{renyi - eq 2}
\dfrac{\left\|S^{\tilde F_n}_{q_{n+1}}\right\|_2}{\left\|S^{\tilde F_n}_{q_{n+1}}\right\|_1}\lesssim
\dfrac{\sqrt[4]{2}\cdot\dfrac{q_{n+1}}{\sqrt{\pi n}}}{\dfrac{q_{n+1}}{\sqrt{\pi n}}}\lesssim 1\,.
\end{equation}

\subsection{Renyi inequality for $F$}

Using (\ref{estimate lambda_n}),
$$\left|\left\|S^F_{q_{n+1}}\right\|_1-\left\|S^{\tilde F_n}_{q_{n+1}}\right\|_1\right|
\ \le\ \int_{\T\backslash\Lambda_n}\left|S^F_{q_{n+1}}-S^{\tilde F_n}_{q_{n+1}}\right|d\la
\ < \ q_{n+1}\cdot 2^{-n+1}$$
and so
$$\left|\dfrac{\left\|S^F_{q_{n+1}}\right\|_1}{\left\|S^{\tilde F_n}_{q_{n+1}}\right\|_1}-1\right|
\ \lesssim\ \dfrac{q_{n+1}\cdot 2^{-n+1}}{\dfrac{q_{n+1}}{\sqrt{\pi n}}}\ \approx\ 0,$$
proving that $\left\|S^F_{q_{n+1}}\right\|_1\approx\left\|S^{\tilde F_n}_{q_{n+1}}\right\|_1$.
Analogously,
$$\left|\dfrac{\left\|S^F_{q_{n+1}}\right\|_2^2}{\left\|S^{\tilde F_n}_{q_{n+1}}\right\|_2^2}-1\right|
\ \lesssim\ \dfrac{q_{n+1}^2\cdot 2^{-n+1}}{\dfrac{q_{n+1}^2}{\pi n}}\ \approx\ 0$$
and so $\left\|S^F_{q_{n+1}}\right\|_2\approx\left\|S^{\tilde F_n}_{q_{n+1}}\right\|_2$.
These two estimates, together with (\ref{renyi - eq 2}), guarantee that
$$\left\|S^F_{q_{n+1}}\right\|_2\lesssim \left\|S^F_{q_{n+1}}\right\|_1,$$
thus establishing the Renyi inequality for $F$ along $(q_n)$. This concludes the proof of
Theorem \ref{thm 1}.\\

We calculate the return sequence $(a_{q_n})$ for $F$. According to (\ref{renyi - eq 1}), it is given by
$$a_{q_{n+1}}=\left\|S^F_{q_{n+1}}\right\|_1\approx\left\|S^{\tilde F_n}_{q_{n+1}}\right\|_1\approx
\dfrac{q_{n+1}}{\sqrt{\pi n}}$$
and so for a fixed $x\in\Lambda_n$ the normalized averages
\begin{equation}\label{equation norm average}
\dfrac{S_{q_{n+1}}^F(x)}{a_{q_{n+1}}}\approx
\dfrac{\dfrac{q_{n+1}}{2^n}\displaystyle{n\choose{\frac{n+m_n(x)}{2}}}}{\dfrac{q_{n+1}}{\sqrt{\pi n}}}=
\dfrac{\displaystyle{n\choose{\frac{n+m_n(x)}{2}}}}{\dfrac{2^n}{\sqrt{\pi n}}}\approx
\sqrt{2}\cdot\dfrac{\displaystyle{n\choose{\frac{n+m_n(x)}{2}}}}{\displaystyle{n\choose{\frac{n}{2}}}}
\end{equation}
do not depend on the choice of $\al$ neither on the sequence $(q_n)$.

\section{Final comments}

\noindent {\bf 1.} In order to obtain ergodic cylinder flows on $\T\times\R$, one can consider a similar
construction to ours with roof function as in (\ref{intro - eq 2}), where $\be\in\R$ is irrational. In this
case, the image of the map is contained in $\T\times\{m+n\be\,;\,m,n\in\Z\}$, which is dense in $\T\times\R$.\\

\noindent {\bf 2.} So far, all the examples of rationally ergodic cylinder flows use
non-continuous roof functions. Another natural program is to construct examples with
continuous (even $C^1$ and $C^\infty$) roof functions. It seems to us that the same approach
developed in the present paper might work if one can interpret the sequence $(m_n)$ as defined in
Subsection \ref{sub branches and plateaux} from a random perspective.\\

\section*{Acknowledgments}

The authors are thankful to IMPA for the excellent ambient during the preparation of this
manuscript and to Alejandro Kocsard, Fran\c cois Ledrappier, Carlos Gustavo Moreira and Omri
Sarig for valuable comments and suggestions. This research was possible due to the support of
CNPq-Brazil, Faperj-Brazil and J. Palis 2010 Balzan Prize for Mathematics.

\appendix

\section{Random walks}\label{appendix random walk}

Let $T:\T\rightarrow\Z$ as defined in Section \ref{section - construction of function}.
For each sequence of positive integers $(q_n)$, we associate the sequence $(T_n)$ of
functions defined on $\T$ by $T_n(x)=T(q_nx)$. This appendix is devoted to the analysis
of the partial sums
$$m_n(x)=T_1(x)+\cdots+T_n(x)\ , \ \ n\ge 1.$$
The sequence $(m_n)$ defines a random walk in $\Z$ and we are particularly interested in
the interaction between different walks. We say that $(m_n)$ has the {\it level-crossing property} if,
for Lebesgue almost every $x,y\in\T$, there exist infinitely many $n$ such that $m_n(x)=m_n(y)$.

If we assume that $2q_n$ divides $q_{n+1}$ then every plateau of $T_n$ contains exactly
the same number of positive and negative plateaux of $T_{n+1}$. If this holds for every $n$
then, for any $s_1,\ldots,s_n\in\{-1,1\}$,
$$\la(\{x\in\T\,;\,T_j(x)=s_j\text{ for }j=1,\ldots,n\})=2^{-n}$$
and so the $(T_n)$ are independent and identically distributed (i.i.d). In this case $(m_n)$
is a simple random walk in $\Z$, and thus the map that associates to each pair $(x,y)\in\T\times\T$
the process $(m_n(x)-m_n(y))$ is a random walk in $\Z$ with finite support and zero mean.
In particular, $(m_n(x)-m_n(y))$ is recurrent almost surely (see \S 4.2 of \cite{Dur}),
and so $(m_n)$ has the level-crossing property.

The same might not be true if $2q_n$ does not divide $q_{n+1}$. On the other hand, if $q_{n+1}$
is much larger than $q_n$, almost every plateau of $T_{n+1}$ is entirely contained inside a
plateau of $T_n$ and so $(T_n)$ exhibits some sort of asymptotic independence. This is the
content of the next result, which is used in Section \ref{section - ergodicity} to prove ergodicity
when one does not have the divisibility condition.  The idea is to remove plateaux of $T_{n+1}$
not entirely contained inside plateaux of $T_n$ in such a way that independence holds in their
complement.

\begin{lemma}\label{lemma random walk}
Let $(q_n)$ be a sequence of positive integers and let $(T_n)$, $(m_n)$ be as above. If
$$\sum_{n\ge 1}\dfrac{q_n}{q_{n+1}}<\infty$$
then $(m_n)$ has the level-crossing property.
\end{lemma}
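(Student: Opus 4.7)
The plan is to reduce the level-crossing property to the recurrence of a simple symmetric random walk on $\Z$, by showing that under $\sum q_n/q_{n+1}<\infty$ the sequence $(T_n)$ is asymptotically i.i.d.\ on a set of full measure and then coupling to a genuine i.i.d.\ sequence.

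First I would apply the first Borel--Cantelli lemma to the ``non-refinement'' event
\[E_n=\{x\in\T:I_{n+1}(x)\not\subset I_n(x)\}.\]
The boundaries of the $2q_n$ plateaux of $T_n$ are $2q_n$ points, each contained in at most one plateau of $T_{n+1}$ of length $1/(2q_{n+1})$, whence $\la(E_n)\le q_n/q_{n+1}$. The hypothesis $\sum_n\la(E_n)<\infty$ and Borel--Cantelli give $\la(\bigcup_{n_0}\Omega_{n_0}^\infty)=1$ where $\Omega_{n_0}^\infty=\bigcap_{j\ge n_0}(\T\setminus E_j)$, and on $\Omega_{n_0}^\infty$ one has the nested plateau chain $I_{n_0}(x)\supset I_{n_0+1}(x)\supset\cdots$.

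Second I would estimate the conditional bias of $T_{j+1}$ inside a plateau of $T_j$. Let $P$ be a plateau of $T_j$ meeting $\Omega_j$; since $P$ has length $1/(2q_j)$ and the plateaux of $T_{j+1}$ entirely contained in $P$ alternate in sign, the numbers of positive and negative such sub-plateaux differ by at most one. Hence for $x$ distributed according to Lebesgue on $P\cap\Omega_j$, the conditional probabilities $T_{j+1}(x)=\pm 1$ both equal $1/2$ with additive error of order $q_j/q_{j+1}$, so the conditional law of $T_{j+1}$ lies within total variation distance $O(q_j/q_{j+1})$ of the uniform law on $\{-1,+1\}$. An inductive maximal-coupling construction on an enlarged probability space then yields, independently for $x$ and $y$, sequences $(\xi_j)$ and $(\eta_j)$ of i.i.d.\ uniform $\pm 1$ variables such that the measure of the mismatch event $\{T_j(x)\ne\xi_j\}\cup\{T_j(y)\ne\eta_j\}$ is $\lesssim q_{j-1}/q_j$. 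Summability and another Borel--Cantelli step then give, for $(\la\times\la)$-a.e.\ $(x,y)$, the eventual equalities $T_j(x)=\xi_j$ and $T_j(y)=\eta_j$ for all $j$ sufficiently large.

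Third, on this full-measure set and for $n$ large,
\[m_n(x)-m_n(y)=\sum_{j=1}^n(\xi_j-\eta_j)+C(x,y),\]
where $C(x,y)\in 2\Z$ collects the finitely many early mismatches. The increments $\xi_j-\eta_j$ are i.i.d., mean zero, and supported in $\{-2,0,2\}$, so the partial sums form a recurrent random walk on $2\Z$ that visits every even integer infinitely often almost surely (e.g.\ by Chung--Fuchs). In particular it hits $-C(x,y)$ infinitely often, which yields $m_n(x)=m_n(y)$ for infinitely many $n$ and proves the level-crossing property.

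The main technical obstacle is making the coupling measure-theoretically rigorous: one must build the joint law of $(T_j,\xi_j)$ inductively on an enlarged probability space so that the $\xi$-marginal is genuinely i.i.d.\ uniform and independent of $(\eta_j)$, while the conditional mismatch probability at step $j$ remains $O(q_{j-1}/q_j)$ even after conditioning on the full past. An alternative avoiding the coupling would be a Kochen--Stone second-moment argument applied to the events $\{m_n(x)=m_n(y)\}$, but controlling their pairwise correlations under the near-independence of $(T_n)$ seems no easier.
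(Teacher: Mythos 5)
Your argument is correct in outline and reaches the same conclusion, but it follows a genuinely different route than the paper's. The paper constructs, by an explicit induction, a descending chain of sets $\Omega_n\subset\T$ on which $T_1,\ldots,T_n$ are \emph{exactly} i.i.d.\ uniform on $\{-1,1\}$: at each step it discards not only the plateaux of $T_{n+1}$ that straddle a boundary of $T_n$, but also up to two more per plateau of $T_n$, so as to make the counts of positive and negative sub-plateaux exactly equal and independent of the plateau. Summability of $q_n/q_{n+1}$ gives $\la(\Omega_{n_0}^\infty)\to 1$, and on $\Omega_{n_0}^\infty\times\Omega_{n_0}^\infty$ the difference process is a genuine mean-zero finite-support random walk, hence recurrent. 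You instead keep the full measure space, use the same Borel--Cantelli estimate only to control nesting, observe that the conditional law of $T_{j+1}$ is within total variation $O(q_j/q_{j+1})$ of uniform, and build a maximal coupling with a genuine i.i.d.\ sequence; a second Borel--Cantelli step makes the two sequences agree eventually, and the recurrence of the coupled i.i.d.\ walk (together with the fact that it visits \emph{every} element of $2\Z$ infinitely often, so the random eventual offset $C(x,y)$ is irrelevant) finishes the proof. What the paper's construction buys is that the technical delicacy disappears: independence is exact, and no enlarged probability space or coupling is needed. What your approach buys is that the bias estimate and Borel--Cantelli replace the somewhat fiddly plateau bookkeeping; it also generalizes more readily to situations where exact balancing is unavailable. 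One point that deserves more care than your sketch gives it: the conditional bias bound $O(q_{j}/q_{j+1})$ is immediate only on cells that are full plateaux of $T_j$, i.e.\ when nesting has held up to step $j$; on cells that are proper subintervals the conditional bias can be of order $1$. The fix is that the total measure of all cells of the partition by $T_1,\ldots,T_j$ of length $\lesssim 1/q_{j+1}$ is itself $\lesssim q_j/q_{j+1}$ (there are $\lesssim q_j$ cells in total), so the \emph{unconditional} mismatch probability at step $j+1$ is still $O(q_j/q_{j+1})$ and Borel--Cantelli applies. This is exactly the obstacle you flag, and once it is addressed the coupling argument is sound.
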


\begin{proof}
We will construct a descending chain of Borel sets $(\Omega_n)$ of $\T$ such that, restricted to $\Omega_n$,
the first $n$ functions $T_1,\ldots,T_n$ are i.i.d. A simple argument of induction will imply that
the $(T_n)$ are i.i.d in the intersection $\Omega^\infty=\bigcap_{n\ge 1}\Omega_n$.

The construction is by induction. Let $\F_n$ be the family of plateaux of $T_n$ and
$\F_n=\F_n^+\bigsqcup\F_n^-$ its decomposition in positive and negative plateaux, respectively. Assume that
$\Omega_1=\T,\ldots,\Omega_n$ have been constructed satisfying the following conditions.
\begin{enumerate}[(i)]
\item For $1\le j\le n$, there is a set $\G_j\subset\F_j$ such that
$\Omega_j=\bigcup_{J\in\mathcal G_j}J$.
\item For $1\le i<j\le n$, every element of $\G_j$ is contained in exactly one element of $\G_i$.
\item For any $s_1,\ldots,s_n\in\{-1,1\}$,
$$\la(\{x\in \Omega_n\,;\,T_j(x)=s_j\text{ for }j=1,\ldots,n\})=\dfrac{\la(\Omega_n)}{2^n}\,\cdot$$
\end{enumerate}
Observe that (ii) automatically implies that $\{x\in \Omega_n\,;\,T_j(x)=s_j\text{ for }j=1,\ldots,n\}$
is the union of elements of $\mathcal G_n$. Now let
$$\G_{n+1}=\{J\in\F_{n+1}\,;\,\exists\,I\in\G_n\text{ such that }J\subset I\}\ \ \text{ and }\ \
\Omega_{n+1}=\bigcup_{J\in\G_{n+1}}J.$$
For each $I\in\G_n$, the number of elements of $\G_{n+1}$ entirely contained in $I$ is between
$q_{n+1}/q_n-2$ and $q_{n+1}/q_n$. We may assume, removing at most two of these plateaux, that
\begin{equation}\label{rw - eq 1}
\#\left\{J\in\mathcal G_{n+1}^+\,;\,J\subset I\right\}=\#\left\{J\in\mathcal G_{n+1}^-\,;\,J\subset I\right\}
\end{equation}
and it is independent of $I$. (i) and (ii) are satisfied by definition. For (iii), fix
$s_1,\ldots,s_n\in\{-1,1\}$ and let $\G\subset\G_n$ such that
$$\{x\in \Omega_n\,;\,T_j(x)=s_j\text{ for }j=1,\ldots,n\}=\bigcup_{I\in\G}I.$$
Then
$$\{x\in \Omega_{n+1}\,;\,T_j(x)=s_j\text{ for }j=1,\ldots,n\text{ and }T_{n+1}(x)=1\}=
\bigcup_{I\in\mathcal G}\bigcup_{J\in\mathcal G_{n+1}^+\atop{J\subset I}}J$$
has Lebesgue measure equal to
$$\#\G\cdot\#\{J\in\mathcal G_{n+1}^+\,;\,J\subset I\}\cdot\dfrac{1}{2q_{n+1}}\,,$$
which is, by (\ref{rw - eq 1}), independent of $s_1,\ldots,s_n$. Doing the same when $T_{n+1}(x)=-1$,
(iii) is established.

The same argument applies to prove that, for $m\ge n$,
$$\la(\{x\in\Omega_m\,;\,T_j(x)=s_j\text{ for }j=1,\ldots,n\})=\dfrac{\la(\Omega_m)}{2^n}$$
and so, letting $m\rightarrow\infty$,
$$\la(\{x\in\Omega^\infty\,;\,T_j(x)=s_j\text{ for }j=1,\ldots,n\})=\dfrac{\la(\Omega^\infty)}{2^n}\,,$$
proving that the $(T_n)$ are independent in $\Omega^\infty$.

Now we estimate $\la(\Omega^\infty)$. By construction, inside each $I\in\mathcal G_n$ at most 4 elements
of $\F_{n+1}$ are removed and so
$$\la\left(\bigcup_{J\in\mathcal G_{n+1}\atop{J\subset I}}J\right)\ge\la(I)-4\cdot\dfrac{1}{2q_{n+1}}\,\cdot$$
Summing this up in $I$ yields
$$\la(\Omega_{n+1})\ge\la(\Omega_n)-\#\mathcal G_n\cdot\dfrac{2}{q_{n+1}}\ge\la(\Omega_n)-\dfrac{4q_n}{q_{n+1}}$$
and then
$$\la(\Omega^\infty)\ge 1-4\sum_{n\ge 1}\dfrac{q_n}{q_{n+1}}\,\cdot$$
If, instead of beginning the construction in step 1 we start in step $n_0$, the limit set $\Omega_{n_0}^\infty$
has Lebesgue measure at least $1-4\sum_{n\ge n_0}q_n/q_{n+1}$. By construction, the map that sends
$(x,y)\in\Omega_{n_0}^\infty\times\Omega_{n_0}^\infty$ to the process $(m_n(x)-m_n(y))_{n\ge n_0}$ is a random
walk in $\Z$ with finite support and zero mean, and thus it visits $0$ infinitely often almost surely
(see \S 4.2 of \cite{Dur}).

Since $\bigcup_{n_0\ge 1}\Omega_{n_0}^\infty$ has full Lebesgue measure, it follows that $(m_n)$ has the
level-crossing property.
\end{proof}

\section{A fact on continued fractions}\label{appendix continued fractions}

Remember the definition of Subsection
\ref{sub continued fractions}: $\al\in\R$ is {\it divisible} if it has a sequence $(q_{n_j})$ of
continuants satisfying
$$2q_{n_j}\text{ divides }q_{n_{j+1}}\ \ \text{ and }\ \ \lim_{j\rightarrow\infty} q_{n_j}\|q_{n_j}\al\|=0.$$
In this appendix, we want to prove that

\begin{proposition}\label{proposition cf 1}
Lebesgue almost every $\al\in\R$ is divisible.
\end{proposition}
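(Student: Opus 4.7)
The goal is to construct, for Lebesgue-a.e.\ $\alpha$, an infinite sequence $n_1<n_2<\dots$ with $2q_{n_j}\mid q_{n_{j+1}}$ and $q_{n_j}\|q_{n_j}\alpha\|\to 0$. I proceed by induction: let $A_k$ be the set of $\alpha$ admitting such an initial segment $n_1<\dots<n_k$ with the quantitative smallness $q_{n_j}\|q_{n_j}\alpha\|<1/j$ for $j\leq k$. The claim will follow from $\lambda(A_k)=1$ for every $k$, since then $\bigcap_k A_k$ has full measure and its elements are divisible. The base $A_0=\R$ is trivial; the work is in extending from $A_k$ to $A_{k+1}$.

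The combinatorial heart is the following short lemma: for every $q\in\N$ and every coprime pair $(r,s)\in\N^2$, there exist $b_1,b_2\in\N$ such that the generalised continuant sequence $u_{-1}=s$, $u_0=r$, $u_i=b_iu_{i-1}+u_{i-2}$ satisfies $u_2\equiv 0\pmod{2q}$. To construct $b_1$: for each prime $p\mid 2q$, either $p\mid r$ (so $u_1\equiv s\pmod p$ is automatically a unit, since $\gcd(r,s)=1$) or $p\nmid r$ (so one avoids the single residue $-sr^{-1}\pmod p$); CRT then yields some $b_1\in\N$ with $\gcd(u_1,2q)=1$, and setting $b_2\equiv -u_0u_1^{-1}\pmod{2q}$ gives $u_2\equiv 0\pmod{2q}$.

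For the inductive extension, fix $\alpha\in A_k$ with indices $n_1<\dots<n_k$, put $M:=2q_{n_k}$ and $s_m:=(q_{m-1},q_m)\bmod M$ for $m>n_k$. Because consecutive continuants are coprime, $s_m$ is always a primitive pair in $(\Z/M\Z)^2$. By the lemma, from any primitive state, a positive-measure (under the Gauss measure) choice of the next two partial quotients $(a_{m+1},a_{m+2})$ drives $s_{m+2}$ into $\{(\ast,0)\}$. Ergodicity of the Gauss map (in fact its exponential mixing), combined with a Borel--Cantelli argument for the skew product of the Gauss map with the finite-state recurrence on primitive pairs, shows this event recurs at infinitely many $m$ for $\lambda$-a.e.\ $\alpha$; among those one may additionally require $a_{m+3}\geq k+1$, an event of positive Gauss probability, which forces $q_{m+2}\|q_{m+2}\alpha\|<1/(k+1)$. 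Setting $n_{k+1}:=m+2$ establishes $\lambda(A_{k+1})=1$.

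The main obstacle is formalising the ergodic argument of the last paragraph: one must show that the ``divisibility'' and ``large next partial quotient'' events recur \emph{simultaneously} for infinitely many $m$, $\lambda$-a.s., despite the non-independence of the partial quotients under the Gauss measure. The clean route is via the skew product of the Gauss map with the finite-state dynamics on the primitive pairs of $(\Z/M\Z)^2$; its ergodicity reduces to the group-theoretic fact that the semigroup generated by $M_b=\begin{pmatrix}b&1\\1&0\end{pmatrix}$, $b\in\N$, surjects modulo $M$ onto the determinant-$\pm 1$ subset of $GL_2(\Z/M\Z)$ --- which follows from the factorisation $M_b=U^bS$, since $U=\begin{pmatrix}1&1\\0&1\end{pmatrix}$ and $L=SUS=\begin{pmatrix}1&0\\1&1\end{pmatrix}$ together generate $SL_2(\Z)$. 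A standard second Borel--Cantelli argument then yields the simultaneous recurrence and closes the induction.
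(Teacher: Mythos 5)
Your proposal is correct in outline and shares its number-theoretic core with the paper: the key lemma (that from any primitive state $(q_{m-1},q_m)\bmod M$, two consecutive partial quotients can be prescribed modulo $M$ so as to force $M\mid q_{m+2}$) is, up to presentation, the same as the paper's Lemma~\ref{appendix - lemma 1}. Where you genuinely diverge is the measure-theoretic argument. The paper avoids any appeal to ergodicity or mixing of the Gauss map: it uses the elementary conditional-probability bounds of Lemma~\ref{appendix - lemma 3} (uniform in the past, independent of how the previous partial quotients were chosen), so that the probability of \emph{failing} the target event in consecutive blocks of three partial quotients is bounded above by factors of the form $1-\frac{c}{k}$, and the resulting infinite product vanishes. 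This gives a short, self-contained proof requiring nothing beyond the ratio of nested Farey intervals. Your route instead passes through a finite-group (or finite-state) extension of the Gauss map, ergodicity of that skew product via the generation of $GL_2(\Z/M\Z)^{\det=\pm1}$ by the matrices $M_b=U^bS$, and a second Borel--Cantelli argument; this is plausible but considerably heavier, and the sketch leaves the two substantive points (ergodicity of the skew product, and the quasi-independence needed for BC2) as ``standard'', whereas they require genuine verification -- in particular the cocycle into a non-abelian finite group being non-cohomologous to one into a proper subgroup is not automatic from the semigroup surjection you cite. The paper also organizes the intersection differently (fix $q$, show the set $D_q$ of $\alpha$ admitting infinitely many $n$ with $q\mid q_n$ and $a_{n+1}\ge n$ is conull, then intersect over $q$, rather than extending one index at a time), which lets a single statement handle all the moduli needed in the inductive construction of $(n_j)$; this is a minor but clean simplification you may want to adopt.
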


To prove it, we first collect an auxiliary lemma and identify a mechanism to guarantee the divisibility
property. Once this is done, Proposition \ref{proposition cf 1} will follow. We acknowledge Carlos Gustavo
Moreira for communicating us this proof.

For positive integers $a_1,\ldots,a_n$, we recall the {\it continuant} $K(a_1,\ldots,a_n)$ denotes the
denominator of the rational number
$$[0;a_1,\ldots,a_n]\ =\ \dfrac{1}{a_1+\dfrac{1}{a_2+\dfrac{\ \ \ 1\ \ \ }{\ddots+\dfrac{1}{a_n}}}}\,\cdot$$

\begin{lemma}\label{appendix - lemma 1}
Let $n\ge 3$, $a_1,a_2,\ldots,a_{n-1}$ and $q$ be positive integers. Then there exist integers
$a,b$ such that if
$$
\left\{
\begin{array}{rcl}
a_n    &\equiv&a\pmod q\\
a_{n+1}&\equiv&b\pmod q
\end{array}\right.
$$
then $q$ divides $K(a_1,a_2,\ldots,a_n,a_{n+1})$.
\end{lemma}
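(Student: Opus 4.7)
My strategy rests on the continuant recursion
\[
K(a_1,\ldots,a_n,a_{n+1}) \;=\; a_{n+1}\,K(a_1,\ldots,a_n) \;+\; K(a_1,\ldots,a_{n-1}),
\]
which reduces the problem to a pair of successive linear congruences in the unknowns $a_n, a_{n+1}$. Writing $K_j := K(a_1,\ldots,a_j)$ for the fixed initial segment and $L := K(a_1,\ldots,a_n) = a_n K_{n-1} + K_{n-2}$, the divisibility $q \mid K(a_1,\ldots,a_n,a_{n+1})$ is equivalent to
\[
a_{n+1}\, L \;\equiv\; -K_{n-1} \pmod{q}.
\]
So the task is (i) to choose $a \equiv a_n \pmod q$ making $L$ invertible modulo $q$, and then (ii) to take $b \equiv -K_{n-1}\, L^{-1} \pmod q$.

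The content is in step (i), i.e.\ in finding $a$ so that $\gcd(a K_{n-1} + K_{n-2}, q) = 1$. I would handle this prime by prime and then glue with the Chinese Remainder Theorem, using the standard fact that consecutive continuants are coprime: $\gcd(K_{n-1}, K_{n-2}) = 1$. Fix a prime $p \mid q$. If $p \mid K_{n-1}$, then for \emph{any} $a$ we have $aK_{n-1}+K_{n-2} \equiv K_{n-2} \pmod p$, which is nonzero mod $p$ by coprimality, so there is nothing to avoid. If $p \nmid K_{n-1}$, then $K_{n-1}$ is invertible mod $p$ and the single forbidden residue is $a \equiv -K_{n-2} K_{n-1}^{-1} \pmod p$; this leaves at least $p-1 \ge 1$ allowed residues modulo $p$. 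By CRT, we can pick a single integer $a$ satisfying the allowed congruences at every prime divisor of $q$, and for this $a$ one has $\gcd(L,q)=1$.

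The main (minor) obstacle is verifying the coprimality $\gcd(K_{n-1},K_{n-2})=1$ in the right place, which is a standard induction on the continuant recursion $K_j = a_j K_{j-1} + K_{j-2}$: the invariant $\gcd(K_j,K_{j-1})=1$ propagates from $\gcd(K_1,K_0)=\gcd(a_1,1)=1$. Granted that, the construction above yields integers $a,b$ such that whenever $a_n \equiv a$ and $a_{n+1} \equiv b \pmod q$, the displayed linear congruence holds, and hence $q \mid K(a_1,\ldots,a_n,a_{n+1})$. (One should also remark that partial quotients must be positive integers, which is harmless: any residue class $a \pmod q$ contains positive integer representatives, obtained by adding a sufficiently large multiple of $q$.) The hypothesis $n \ge 3$ simply ensures that the recursion has been applied at least twice so that both $K_{n-1}$ and $K_{n-2}$ are genuine continuants to which the coprimality lemma applies.
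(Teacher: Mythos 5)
Your proof is correct and follows essentially the same two-step strategy as the paper: use the continuant recursion to reduce the problem to choosing $a$ so that $L=aK_{n-1}+K_{n-2}$ is a unit modulo $q$, and then solve the resulting linear congruence for $b$, with $\gcd(K_{n-1},K_{n-2})=1$ as the key arithmetic input. The only difference is cosmetic: where you glue local choices of $a$ via the Chinese Remainder Theorem, the paper writes down one explicit choice, $a=\prod\{p \text{ prime}: p\mid q,\ p\nmid K_{n-1},\ p\nmid K_{n-2}\}$, and checks directly that $\gcd(aK_{n-1}+K_{n-2},q)=1$.
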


\begin{proof}
Let $a$ be the product of the primes that divide $q$ and do not divide neither of the continuants $K(a_1,a_2,\ldots,a_{n-2})$, $K(a_1,a_2,\ldots,a_{n-1})$. If $a_n\equiv a\pmod q$, then
$$K(a_1,a_2,\ldots,a_n)= a\cdot K(a_1,a_2,\ldots,a_{n-1})+K(a_1,a_2,\ldots,a_{n-2})$$
and $q$ are coprime. This guarantees that, as $b$ varies modulo $q$, the number
$$K(a_1,a_2,\ldots,a_n,a_{n+1})=b\cdot K(a_1,a_2,\ldots,a_n)+K(a_1,a_2,\ldots,a_{n-1})$$
runs over all residues modulo $q$ and so, for one of these classes, it is divisible by $q$.
\end{proof}

The auxiliary lemma concerns the following elementary facts about continued fractions and continuants.

\begin{lemma}\label{appendix - lemma 3}
Let $\alpha=[a_0;a_1,a_2,\ldots]$ be an irrational number.
\begin{itemize}
\item[(a)] If $(q_n)$ is the sequence of continuants of $\al$, then
$$\dfrac{1}{a_{n+1}+2}<q_n\|q_n\al\|<\dfrac{1}{a_{n+1}}\,\cdot$$
\item[(b)] The probability that $a_{n+1}=k$, given that $a_1=k_1,\ldots,a_n=k_n$,
is between $\frac{1}{(k+1)(k+2)}$ and $\frac{2}{k(k+1)}$.
\item[(c)] The probability that $a_{n+1}\ge k$, given that $a_1=k_1,\ldots,a_n=k_n$,
is between $\frac{1}{k+1}$ and $\frac{2}{k}$.
\end{itemize}
\end{lemma}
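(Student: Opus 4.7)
The plan is to parametrize $\alpha$ by its tail $\alpha_{n+1}=[a_{n+1};a_{n+2},\ldots]\in[a_{n+1},a_{n+1}+1)$ via the standard Möbius identity
$$\alpha \;=\; \frac{\alpha_{n+1}p_n+p_{n-1}}{\alpha_{n+1}q_n+q_{n-1}},$$
which, combined with the fundamental relation $p_nq_{n-1}-p_{n-1}q_n=(-1)^{n+1}$, reduces everything to direct computation. All three parts are classical; the role of the proof is mainly bookkeeping.

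For part (a), I compute directly from the Möbius identity that
$$\alpha-\frac{p_n}{q_n} \;=\; \frac{(-1)^n}{q_n\bigl(\alpha_{n+1}q_n+q_{n-1}\bigr)},$$
so $q_n\|q_n\alpha\| = q_n/(\alpha_{n+1}q_n+q_{n-1})$. Since $a_{n+1}\le \alpha_{n+1}<a_{n+1}+1$ and $0\le q_{n-1}<q_n$, the upper bound follows from $\alpha_{n+1}q_n+q_{n-1}>a_{n+1}q_n$, and the lower bound from the crude estimate $\alpha_{n+1}q_n+q_{n-1}<(a_{n+1}+1)q_n+q_n=(a_{n+1}+2)q_n$.

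For parts (b) and (c), I first identify the cylinder $C=\{a_1=k_1,\ldots,a_n=k_n\}$ as the image under the above Möbius map of the ray $\alpha_{n+1}\in[1,\infty)$. Its endpoints are $p_n/q_n$ and $(p_n+p_{n-1})/(q_n+q_{n-1})$, and using $p_nq_{n-1}-p_{n-1}q_n=\pm1$ I get $\lambda(C)=1/(q_n(q_n+q_{n-1}))$. The event $\{a_{n+1}=k\}$ is the sub-interval coming from $\alpha_{n+1}\in[k,k+1)$, with endpoints $(kp_n+p_{n-1})/(kq_n+q_{n-1})$ and $((k+1)p_n+p_{n-1})/((k+1)q_n+q_{n-1})$ and length $1/\bigl((kq_n+q_{n-1})((k+1)q_n+q_{n-1})\bigr)$; similarly $\{a_{n+1}\ge k\}$ comes from $\alpha_{n+1}\in[k,\infty)$ and has length $1/\bigl(q_n(kq_n+q_{n-1})\bigr)$. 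Writing $r=q_{n-1}/q_n\in[0,1)$, the conditional probabilities become the clean expressions
$$\mathbb{P}(a_{n+1}=k\mid\cdot)=\frac{1+r}{(k+r)(k+1+r)},\qquad \mathbb{P}(a_{n+1}\ge k\mid\cdot)=\frac{1+r}{k+r}.$$
From $1\le 1+r\le 2$ and $k(k+1)\le(k+r)(k+1+r)\le(k+1)(k+2)$ one reads off the bounds in (b); from $(1+r)(k+1)\ge k+r$ and $k(1+r)\le 2(k+r)$ one reads off (c).

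There is no serious obstacle here; the only care needed is endpoint bookkeeping (using $p_nq_{n-1}-p_{n-1}q_n=\pm1$ with the right sign, and tracking which pair of consecutive convergents corresponds to each end of the sub-interval) and noting that the probabilities come out as honest Möbius-ratios of Lebesgue lengths because the numerators of all the relevant cross-differences collapse to $\pm1$.
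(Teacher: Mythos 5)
Your proof is correct and follows essentially the same route as the paper: the key computation for (b) is identical, namely the ratio of cylinder lengths $\frac{1+r}{(k+r)(k+1+r)}$ with $r=q_{n-1}/q_n$, obtained from the Möbius parametrization and the relation $|p_nq_{n-1}-p_{n-1}q_n|=1$. The only minor differences are that you prove (a) directly from $q_n\|q_n\alpha\|=q_n/(\alpha_{n+1}q_n+q_{n-1})$ where the paper just cites it as standard, and for (c) you compute the exact tail probability $\frac{1+r}{k+r}$ rather than summing the bounds of (b) over $j\ge k$ as the paper does; both immediately give the stated estimates.
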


\begin{proof}
(a) is a well-known fact and can be checked in any introductory text of continued fractions.
Let's prove (b). Once $a_1,\ldots,a_n$ are fixed, the number $\al=[0;a_1,\ldots,a_n,\al_{n+1}]$
belongs to the interval with endpoints $\frac{p_n}{q_n}$ and $\frac{p_n+p_{n-1}}{q_n+q_{n-1}}$.
In these conditions, $a_{n+1}=k$ if and only if $\al$ belongs to the interval of endpoints
$\frac{kp_n+p_{n-1}}{kq_n+q_{n-1}}$ and $\frac{(k+1)p_n+p_{n-1}}{(k+1)q_n+q_{n-1}}$.
Using the relation $|p_n q_{n-1}-p_{n-1}q_n|=1$, it follows that the ratio of the lengths of
these two intervals is equal to
$$\dfrac{q_n(q_n+q_{n-1})}{[kq_n+q_{n-1}][(k+1)q_n+q_{n-1}]}=
\dfrac{1+\frac{q_{n-1}}{q_n}}{\left(k+\frac{q_{n-1}}{q_n}\right)\left(k+1+\frac{q_{n-1}}{q_n}\right)}\ ,$$
which belongs to $\left[\frac{1}{(k+1)(k+2)}\,,\frac{2}{k(k+1)}\right]$. This establishes (b).
To prove (c), just observe that
$$\sum_{j\ge k}\dfrac{1}{(j+1)(j+2)}=\dfrac{1}{k+1}\ \text{ and }\
\sum_{j\ge k}\dfrac{2}{j(j+1)}=\dfrac{2}{k}\,\cdot$$
\end{proof}

\begin{proof}[Proof of Proposition \ref{proposition cf 1}]

For each positive integer $q$, let $D_q$ be the set of $\al\in\R$ for which there are
infinitely many $n\in\N$ such that $q$ divides $q_n$  and $a_{n+1}\ge n$.\\

\noindent {\bf Claim.} $D_q$ has full Lebesgue measure.\\

We prove this via the auxiliary lemmas. Assume that $a_1,a_2,\dots,a_{3k-1}$ are given. By Lemma
\ref{appendix - lemma 1}, there are $a,b\in\{1,\ldots,q\}$ such that
$K(a_1,a_2,\ldots,a_{3k-1},a,b)$ is divisible by $q$. By Lemma \ref{appendix - lemma 3}, the
probability that $a_{3k}=a$, $a_{3k+1}=b$ and $a_{3k+2}\ge 3k+1$ is at least $\frac{1}{(q+1)^2(q+2)^2(3k+1)}$.
Thus, given $k_0\ge 1$, the probability that, for each $k\ge k_0$, either $q_n$ is not a multiple of $q$
or $a_{n+1}<n$, is at most
$$\prod_{k \ge k_0}\left(1-\dfrac{1}{(q+1)^2(q+2)^2(3k+1)}\right)=0.$$
This proves the claim.\\

To conclude the proof of the proposition, consider the intersection $\bigcap_{q\ge 1}D_q$,
which by the above claim has full Lebesgue measure. Each $\al\in \bigcap_{q\ge 1}D_q$
is divisible. Indeed, one can inductively construct a sequence $(n_j)$ such that $2q_{n_j}$ divides $q_{n_{j+1}}$ and
$a_{n_j+1}\ge n_j$. Observing that, by Lemma \ref{appendix - lemma 3},
$$\lim_{j\rightarrow\infty}q_{n_j}\|q_{n_j}\al\|\le \lim_{j\rightarrow\infty}\dfrac{1}{a_{n_j+1}}=0,$$
the proof is complete.
\end{proof}

\begin{remark}
The above argument, together with the fact that, for Lebesgue almost every $\al\in\R$, $(q_n)$
grows at most (and at least) exponentially fast, can be used to show that Lebesgue almost every
$\al\in\R$ has a sequence of continuants $(q_{n_j})$ such that $2q_{n_j}$ divides $q_{n_{j+1}}$ and
$$q_{n_j}\|q_{n_j}\|<\dfrac{1}{\log q_{n_j}}\,\cdot$$
\end{remark}

\bibliographystyle{plain}

\end{document}